\newcommand{\aquatre}{%
\stockaiv\pageaiv%
\settypeblocksize{22cm}{13cm}{*}%
\setlrmargins{4cm}{*}{1}%
\setmarginnotes{0pt}{0pt}{0pt}%
\setulmargins{3.5cm}{*}{1}%
\setheadfoot{3\baselineskip}{3\baselineskip}%
\setheaderspaces{2\baselineskip}{*}{1}%
\checkandfixthelayout%
}
\newcommand{\addpoint}[1]{#1\ ---\ }
\newtheoremstyle{thm}%
     {1.5ex plus .3ex minus .1ex}%
     {1ex plus .3ex minus .1ex}%
     {\itshape}%
     {}%
     {\sffamily}%
     {---}%
     {0em}%
     {$\bullet$\hbox{\ }#1\hbox{\ }#2}%
\theoremstyle{thm}
\newtheorem{definition}{Definition}[section]
\newtheorem{theorem}[definition]{Theorem}
\newtheorem{lemma}[definition]{Lemma}
\newtheorem{proposition}[definition]{Proposition}
\newtheorem{corollary}[definition]{Corollary}
\newtheoremstyle{note}%
     {1ex plus .3ex minus .1ex}%
     {1ex plus .3ex minus .1ex}%
     {}%
     {}%
     {\itshape}%
     {.}%
     {1em}%
     {}%
\theoremstyle{note}
\newtheorem{example}[definition]{Example}
\newtheorem{remark}[definition]{Remark}
\newlength{\remaining}
\newcommand{\labell}[1]{\POS[]\POS!L\drop{\strut\llap{$#1$}\hspace{2ex}}}
\newcommand{\labelu}[1]{\POS[]\POS!U\drop{\raisebox{1.2em}{\strut$#1$}}}
\newcommand{\labeldr}[1]{\POS[]\POS!D(2)!R(1.5)\drop{\strut\rlap{$#1$}\hspace{2ex}}}
\newcommand{\labelr}[1]{\POS[]\POS!R\drop{\strut\hspace{2ex}\rlap{$#1$}}}
\newcommand{\labeld}[1]{\POS[]\POS!D(3)\drop{\strut\rlap{$#1$}\hspace{2ex}}}
\newcommand{\verticale}{\ar@{--}[d]}
\newsavebox{\labelz}\newsavebox{\labelun}
\savebox{\labelz}{$\xymatrix@M=0.2ex{*++[F]{0}}$}
\savebox{\labelun}{$\xymatrix@M=0.2ex{*++[F]{1}}$}
\def\rebar#1{\expandafter\def\csname #1bar\endcsname{\overline{\csname
      #1\endcsname}}}		%
\newcommand{\M}{{\mathcal{M}}}
\newcommand{\N}{{\mathcal{N}}}
\newcommand{\X}{{\mathcal{X}}}
\newcommand{\bbR}{\mathbb{R}}
\newcommand{\bbZ}{\mathbb{Z}}
\renewcommand{\P}{\mathcal{P}}
\newcommand{\C}{\mathscr{C}}
\newcommand{\D}{\mathscr{D}}
\newcommand{\Dstar}{\mathfrak{D}}
\newcommand{\DCS}{{\text{\normalfont\sffamily DCS}}}
\newcommand{\up}[1]{\,\uparrow #1}
\newcommand{\Cstar}{\mathfrak{C}}
\newcommand{\slgb}{\mbox{$\sigma$-al}\-ge\-bra}
\newcommand{\BM}{\partial\M}
\newcommand{\lub}{\textsl{lub}}
\newcommand{\glb}{\textsl{glb}}
\newcommand{\vd}{\varepsilon}
\newcommand{\tq}{\;\big|\;}
\newcommand{\tqs}{\;:\;}
\newcommand{\ie}{\textsl{i.e.}}
\newcommand{\rest}[1]{\bigl|_{#1}}
\newcommand{\un}{\mathbf{1}}
\newcommand{\height}{\tau}
\numberwithin{equation}{section}
\newsavebox{\framedzero}\savebox{\framedzero}{\raisebox{1ex}{\xymatrix{*+[F]{0}}}}
\newlength{\tempa}
\newlength{\tempcc}
\newlength{\hauttrans}
\newlength{\longtrans}
\newlength{\longplace}
\newcommand{\petriexec}{%
\setlength{\hauttrans}{.66ex}
\setlength{\longtrans}{6ex}
\setlength{\longplace}{4ex}
\settoheight{\tempcc}{\strut}
\settodepth{\tempa}{\strut}
\addtolength{\tempcc}{\tempa}
\setlength{\tempcc}{2\tempcc}
\addtolength{\tempcc}{\hauttrans}}
\newcommand{\transh}{*+[F]\txt{\vbox to\hauttrans{\hbox to\longtrans{}}}}
\newcommand{\transhp}{*+[F--]\txt{\vbox to\hauttrans{\hbox to\longtrans{}}}}
\newcommand{\transv}{*+[F]\txt{\vbox to\longtrans{\hbox to\hauttrans{}}}}
\newcommand{\transvp}{*+[F--]\txt{\vbox to\longtrans{\hbox to\hauttrans{}}}}
\newcommand{\place}{*+[o][F]{\vbox to\longplace{\hbox to\longplace{}}}}
\newcommand{\transht}{*+[F.]\txt{\vbox to\hauttrans{\hbox to\longtrans{}}}}
\newcommand{\placet}{*+[o][F.]{\vbox to\longplace{\hbox to\longplace{}}}}
\newcommand{\markp}{\POS[]\drop{\bullet}}
\begin{document}

\begin{center}
  {\huge\bfseries
Deterministic concurrent systems
    }

\Large
\bigskip
Samy Abbes (\texttt{abbes@irif.fr})\\
{\normalsize Universit\'e de Paris --- IRIF (CNRS UMR 8243)}\\[1em]
August 2020
\end{center}

\bigskip

\begin{abstract}
We introduce deterministic concurrent systems as a subclass of concurrent systems. Deterministic concurrent system are ``locally commutative'' concurrent systems. We prove that irreducible and deterministic concurrent systems have unique probabilistic dynamics, and we characterise these systems by means of their combinatorial properties.
\end{abstract}

\section{Introduction}
\label{sec:introduction}

Trace monoids are well known models of concurrency. They represent systems able to perform several types of actions, represented by letters in a given alphabet, and with the feature that some actions may occur concurrently. If $a$ and $b$ are two concurrent actions, then the system does not distinguish between the two sequences of actions $a$-then-$b$ and $b$-then-$a$. Instead, a unique compound action $a\cdot b=b\cdot a$ may be performed. This feature is typically used when one wishes to work on the logical order between actions rather than on the chronological order.

Mathematically, a trace monoid $\M$ is a monoid generated by an alphabet~$\Sigma$, and with relations of the form $ab=ba$ for some fixed pairs of letters $(a,b)\in\Sigma\times\Sigma$. The identity $ab=ba$ in $\M$ renders the concurrency of the two actions $a$ and~$b$. 

The use of trace monoids in concurrency theory goes back at least to the 1980's with survey works such as~\cite{diekert90,diekert95}. Trace monoids had also been studied in Combinatorics under different names, as free partially commutative monoids and heaps of pieces in the seminal works \cite{cartier69} and \cite{viennot86} respectively. Hence, trace monoids stand at a junction point between computer science and combinatorics.

Despite their successful use as models of concurrency for databases for instance, trace monoids lack an essential feature present in most real-life systems, namely they lack a notion of state. Indeed, any action can be performed at any time when considering a trace monoid model; whereas, in real-life systems, some actions may only be enabled when the system enters some specified state, and then one expects the system to enter a new state, determined by the former state and by the action performed.

A natural model combining both the ``built-in'' concurrency feature of trace monoids and the notion of state arises when considering a partially defined monoid action of a trace monoid $\M$ on a finite set of states~$X$. Equivalently, instead of considering that the monoid action is only partially defined, it is more convenient  to introduce a sink state $\bot$ and to consider a total monoid action $(X\cup\{\bot\})\times\M\to(X\cup\{\bot\})$. Hence, if the system is in state~$\alpha$, performing the letter $a\in\Sigma$ brings the system into the new state~$\alpha\cdot a$, with the convention that $a$ was actually not allowed if $\alpha\cdot a=\bot$. This notion of concurrent system, introduced in~\cite{abbes19:_markov}, encompasses in particular popular models of concurrency such as bounded Petri nets~\cite{reisig85,nielsen81}.

In the present paper, we use some results previously obtained in~\cite{abbes19:_markov,abbes20} in order to study a particular case of concurrent systems, namely the class of \emph{deterministic concurrent systems}. Intuitively, a deterministic concurrent system (\DCS) is a concurrent system where no conflict between different actions can ever arise. Hence the only non-determinism left results solely from the concurrency of the model, combined with the constraints imposed by the monoid action. Deterministic concurrent systems can be related, for instance, to causal nets and to elementary event structures found in 1980's papers~\cite{nielsen81}. We prove in particular that deterministic concurrent systems correspond to concurrent systems which are ``locally commutative''.

Compared to general concurrent systems, deterministic concurrent systems appear as limit cases. For instance, we prove that their space of maximal executions is at most countable---whereas it is uncountable in general; if the system is moreover irreducible, we prove that it carries a unique probabilistic dynamics---whereas there is a continuum of them in general. Yet, proving these properties is not trivial. The definition of \DCS\ is formulated in elementary terms; their specific properties are formulated in elementary terms; but the proof of these properties relies on the combinatorics of partially ordered sets.

Beside the general properties of deterministic concurrent systems, our main contribution is to give several equivalent characterisations of concurrent systems which are both deterministic and irreducible: an algebraic characterisation; a probabilistic characterisation; a characterisation from the Analytic combinatorics viewpoint; and a characterisation through set-theoretic properties of the set of infinite executions. The multiplicity of these viewpoints suggests that the notion is worth exploring it.

Another contribution is a generalisation of the well known fact that commutative free monoids have a polynomial growth. The property that we obtain in Corollary~\ref{cor:1} is general enough to be of interest \emph{per se}.

\medskip
Although quite specific, the class of deterministic concurrent systems has a non trivial modelisation power. We also believe that understanding deterministic concurrent systems is useful for the deeper understanding of general concurrent systems.

\paragraph*{Organisation of the paper.}
\label{sec:organisation-paper}

Section~\ref{sec:preliminaries} is devoted to preliminaries, and is divided into three subsections. Sections~\ref{sec:trace-monoids-their} and~\ref{sec:conc-syst-their} survey respectively basic notions on trace monoids and on concurrent systems; Section~\ref{sec:an-elem-comp} is devoted to an elementary, yet original result of trace theory, that we tried to formulate in a way not too specific so that it could be of general interest, and that will be used later in the paper. Deterministic concurrent systems are introduced in Section~\ref{sec:determ-conc-syst}. Section~\ref{sec:irred-determ-conc} is devoted to the study of concurrent systems which are both deterministic and irreducible.

\section{Preliminaries}
\label{sec:preliminaries}

\subsection{Trace monoids and their combinatorics}
\label{sec:trace-monoids-their}

The background material introduced in this section is standard, see for instance~\cite{diekert90,diekert95}, excepted for the probabilistic notions which are borrowed from~\cite{abbes15}.

\paragraph*{Independence and dependence pairs.}
An \emph{alphabet} is a finite set, which we usually denote by~$\Sigma$, the elements of which are called \emph{letters}. An  \emph{independence pair} is a pair $(\Sigma,I)$, where $I$ is a binary symmetric and irreflexive relation on~$\Sigma$, called an \emph{independence relation}. A \emph{dependence pair} is a pair $(\Sigma,D)$, where $D$ is a binary symmetric and reflexive relation on~$\Sigma$, called a \emph{dependence relation}. With $\Sigma$ fixed, dependence and independence relations correspond bijectively to each others, through the association $D=(\Sigma\times\Sigma)\setminus I$.

\medskip{\itshape
In the remaining of Section~\ref{sec:trace-monoids-their}, we fix an independence pair $(\Sigma,I)$, with corresponding dependence pair $(\Sigma,D)$.}

\paragraph*{Traces.}
The \emph{trace monoid}\footnote{In the literature, trace monoids are also called free partially commutative monoids, and they also correspond to right-angled Artin-Tits monoids.} $\M(\Sigma,I)$ is the presented monoid $\M=\langle\Sigma\tq ab=ba\text{ for $(a,b)\in I$}\rangle$. Elements of $\M$ are called \emph{traces}. The unit element, also called \emph{empty trace}, is denoted by~$\vd$, and the concatenation of $x,y\in\M$ is denoted by~$x\cdot y$. We identify letters of the alphabet with their images in~$\M$ through the canonical mappings $\Sigma\to\Sigma^*\to\M$.

The trace monoid $\M$ is \emph{irreducible} if the dependence pair $(\Sigma,D)$, seen as a graph, is connected.


\paragraph*{Length. Occurrence of letters.}
Every trace $x\in\M$ corresponds to the congruence class of some word $u\in\Sigma^*$. The \emph{length} of~$x$, denoted by~$|x|$, is the length of~$u$. For each letter $a\in\Sigma$, we write $a\in x$ whenever $a$ has at least one occurrence in~$u$, and we write $a\notin x$ otherwise.

\paragraph*{Divisibility order.}
The preorder $(\M,\leq)$ inherited from the left divisibility in $\M$ is defined by: $x\leq y\iff(\exists z\in\M\quad y=x\cdot z)$. This preorder is actually a partial order. If $x\leq y$, the element $z\in\M$ such that $y=x\cdot z$ is unique since trace monoids are left cancelable. We denote this element by $z=x\backslash y$.

\paragraph*{Cliques.}
A \emph{clique} of $\M$ is a trace of the form $x=a_1\cdot\ldots\cdot a_i$, where all $a_i$s are letters such that $i\neq j\implies (a_i,a_j)\in I$. Since all $a_i$s commute with each other, we identify the clique $x\in\M$ with the subset $\{a_1,\ldots,a_i\}\in\P(\Sigma)$. If $\C$ denotes the set of cliques of~$\M$, the restricted partial order $(\C,\leq)$ corresponds to a sub-partial order of $(\P(\Sigma),\subseteq)$. We note that $\C$ is always downward closed in $(\P(\Sigma),\subseteq)$, and that $\C$ corresponds to the full powerset $\P(\Sigma)$ if and only if $\M$ is the free commutative monoid on~$\Sigma$.

A \emph{non empty clique} is a clique $x\neq\vd$. The set of non empty cliques of~$\M$ is denoted by~$\Cstar$. Minimal elements of $(\Cstar,\leq)$ correspond to the letters of~$\Sigma$.

\paragraph*{Parallel cliques. Lower and upper bounds.}
\label{sec:lower-upper-bounds}

Any two traces $x,y\in\M$ have a greatest lower bound (\glb) in $(\M,\leq)$, which we denote by $x\wedge y$. They have a least upper bound (\lub) in $(\M,\leq)$, denoted by $x\vee y$ if it exists, if and only if they have a common upper bound.

If $x$ and $y$ are cliques, then $x\wedge y$~is the clique corresponding to the subset $x\cap y\in\P(\Sigma)$. We say that $x$ and $y$ are \emph{parallel}, denoted by $x\parallel y$, if $x\times y\subseteq I$, where $x$ and $y$ are seen as subsets of~$\Sigma$. In this case, $x\vee y$ exists and is given by $x\vee y=x\cdot y=y\cdot x$.

\paragraph*{Normal sequences.}
\label{sec:normal-sequences}

A pair $(x,y)\in\C\times \C$ is a \emph{normal pair} if: $\forall b\in y\quad\exists a\in x\quad(a,b)\in D$. This relation is denoted by $x\to y$. A sequence $(c_i)_i$ of cliques, the sequence being either finite or infinite, is a \emph{normal sequence} if $(c_i,c_{i+1})$ is a normal pair for all pairs of indices $(i,i+1)$.

Note that the empty clique satisfies $x\to\vd$ for all $x\in\C$, and $\vd\to x$ if and only if $x=\vd$.

\paragraph*{Normal form and generalised normal form.}
\label{sec:normal-form}

\cite{cartier69} For any trace $x\neq\vd$, there exists a unique integer $k\geq1$ and a unique normal sequence $(c_1,\ldots,c_k)$ of non empty cliques such that $x=c_1\cdot\ldots\cdot c_k$. The sequence $(c_1,\ldots,c_k)$ is the \emph{Cartier-Foata normal form of~$x$}, or the \emph{normal form of $x$} for short. The integer $k$ is the \emph{height} of~$x$, denoted by $k=\height(x)$.

The \emph{generalised normal form of $x$} is the infinite normal sequence $(c_i)_{i\geq 1}$ defined by $c_i=\vd$ for $i>k$. By definition, the generalised normal form of $\vd$ is the normal sequence $(\vd,\vd,\ldots)$.

For every integer $i\geq1$, we introduce the mapping $C_i:\M\to\C$ defined by $C_i(x)=c_i$, where $(c_i)_{i\geq1}$ is the generalised normal form of~$x$.

\paragraph*{Generalised traces and infinite traces.}
\label{sec:generalised-traces}


A \emph{generalised trace} is any infinite normal sequence $\xi=(c_i)_{i\geq1}$ of cliques. If $c_i=\vd$ for some integer~$i$, then $c_j=\vd$ for all $j\geq i$, and then $\xi$ is the generalised normal form of a unique element of~$\M$. If $c_i\neq\vd$ for all $i\geq1$, then $\xi$ is said to be an \emph{infinite trace}.

We denote by $\Mbar$ the set of generalised traces, and by $\BM$ the set of infinite traces---the latter set is called the boundary at infinity of~$\M$. We note that $\BM$ is non empty as soon as~$\Sigma\neq\emptyset$.

We define a partial order on $(\Mbar,\leq)$ by putting, for $\xi=(c_i)_{i\geq1}$ and $\zeta=(d_i)_{i\geq1}$ two generalised traces:
\begin{gather*}
  \xi\leq\zeta\iff(\forall i\geq 1\quad c_i\leq  d_i).
\end{gather*}

The injection $\M\to\Mbar$ induces an embedding of partial orders $(\M,\leq)\to(\Mbar,\leq)$, so we simply identify $\M$ with its image in~$\Mbar$. With this identification, we have  $\Mbar=\M+\BM$, where `$+$' denotes the disjoint union. 

The family of mappings $(C_i)_{i\geq1}$ extends in the obvious way to the natural projections $C_i:\Mbar\to \C$, with restrictions $C_i:\BM\to\Cstar$.

The digraph $(\C,\to)$ is called the \emph{digraph of cliques} of the monoid. Generalised traces correspond bijectively to infinite paths in $(\C,\to)$, with finite traces corresponding to paths hitting the empty clique~$\vd$, and infinite traces corresponding to paths never hitting the empty clique.

\paragraph*{M\"obius transform.}
\label{sec:mobius-transform}

Let $f:\C\to A$ be a function where $A$ is any commutative group. The \emph{M\"obius transform} \cite{rota64} of~$f$ is the function $h:\C\to A$ defined by:
\begin{gather}
  \label{eq:1}
  \forall c\in\C\quad h(c)=\sum_{c'\in \C\tqs c\leq c'}(-1)^{|c'|-|c|}f(c').
\end{gather} 

The function $f$ can be retrieved from $h$ thanks to the \emph{M\"obius inversion formula}, which is a kind of generalised inclusion-exclusion formula:
\begin{gather}
  \label{eq:2}
  \forall c\in\C\quad f(c)=\sum_{c'\in\C\tqs c\leq c'}h(c').
\end{gather}
In particular, one has:
\begin{gather}
  \label{eq:3}
  f(\vd)=\sum_{c\in \C}h(c).
\end{gather}

\paragraph*{Valuations and probabilistic valuations.}
\label{sec:valuations}

\cite{abbes15} A \emph{valuation} is a monoid homomorphism $f:(\M,\cdot)\to(\bbR_{\geq0},\times)$. One instance is the constant valuation $f=1$. More generally, any assignation of non negative numbers $\lambda_a$ to letters $a$ of $\Sigma$ yields a valuation~$f$, obviously unique, such that $f(a)=\lambda_a$ for $a\in\Sigma$.

Let $h:\C\to\bbR$ be the M\"obius transform of a valuation~$f$, restricted to~$\C$. Then $f$ is a \emph{probabilistic valuation} whenever:
\begin{gather}
  \label{eq:7}
\bigl(  h(\vd)=0\bigr)\quad\wedge\quad\bigl(\forall c\in\Cstar\quad h(c)\geq0\bigr).
\end{gather}

In this case, the vector $\bigl(h(c)\bigr)_{c\in\Cstar}$ is a probability vector. Indeed, it is non negative and it sums up to $1$ thanks to~\eqref{eq:3}, since $f(\vd)=1$ and $h(\vd)=0$.

\paragraph*{Markov chain of cliques.}
\label{sec:mark-chains-cliq}

\cite{abbes15} If $f$ is a probabilistic valuation, then there exists a unique probability measure $\nu$ on $\BM$ equipped with the natural Borel \slgb, such that $\nu(\up x)=f(x)$ for all $x\in\M$, where $\up x$ is the \emph{visual cylinder} defined by $\up x=\{\omega\in\BM\tq x\leq\omega\}$.

With respect to this probability measure, the sequence of mappings $C_i:\BM\to\Cstar$, seen as a sequence of random variables, is a homogeneous Markov chain. Its initial distribution is given by: $\forall c\in\Cstar\quad \nu(C_1=c)=h(c)$, where $h$ is the M\"obius transform of~$f$. The transition matrix of the chain can also be described, but we shall not need it in the sequel.

\paragraph*{Example.}
\label{sec:example}

Let $\M=\langle a,b,c,d\,|\, ad=da,\; bd=db\rangle$. The set of cliques is $\C=\{\vd,\;a,b,c,d,\;ad,bd\}$. Let us simply denote by $a$, $b$, etc, the values of $f(a)$, $f(b)$, etc, for some valuation~$f$. The normalization conditions~(\ref{eq:7}) for $f$ to be a probabilistic valuation are:
\begin{gather*}
  1-a-b-c-d+ad+bd=0\\
  \begin{aligned}
    a-ad&\geq0,&b-bd&\geq0,&c&\geq0,&d&\geq0,&ad&\geq0,&bd&\geq0.
  \end{aligned}
\end{gather*}
A solution is to put $a=b=1/3$ and $c=d=1/4$. Another solution is to put $a=b=c=d=1-\sqrt2/2$. The later value is the root of smallest modulus of the polynomial $1-4p+2p^2$, which we encounter below as the M\"obius polynomial of the monoid.

\paragraph*{Growth series and M\"obius polynomials.}
\label{sec:growth-series-mobius}

The  \emph{growth series} $G(z)$ and the \emph{M\"obius polynomial} $\mu(z)$ of $\M$ are defined as follows:
\begin{align*}
  G(z)&=\sum_{x\in\M}z^{|x|},&\mu(z)&=\sum_{c\in\C}(-1)^{|c|}z^{|c|}.
\end{align*}

\cite{cartier69} The series $G(z)$ is rational, and it is the formal inverse of the M\"obius polynomial:\quad $G(z)\mu(z)=1$.

\cite{krob03,goldwurm00} If $\Sigma\neq\emptyset$, the M\"obius polynomial has a unique root of smallest modulus. This root, say~$r$, is real and lies in $(0,1]$. If $\Sigma=\emptyset$, we put $r=\infty$. In all cases, the radius of convergence of $G(z)$ is~$r$.

We note that: \emph{$r\geq1$ if and only if $\M$ is commutative}---an elementary result to be generalised when dealing with deterministic concurrent systems in Sections~\ref{sec:determ-conc-syst} and~\ref{sec:irred-determ-conc}. Indeed, if $\M$ is not commutative, then $\M$ contains the free monoid on two generators as a submonoid, hence $r\leq1/2$. Whereas, if $\M$ is commutative and $\Sigma$ has $N\geq0$ elements, then $\mu(z)=(1-z)^N$ and therefore $r=1$ or $r=\infty$. In this case, one recovers  from the formula $G(z)=1/(1-z)^N$ the standard elementary result that commutative free monoids have a polynomial growth.


\paragraph*{Representation of traces.}
\label{sec:repr-trac}

The alphabet $\Sigma$ is usually represented by its \emph{Coxeter graph}~\cite{dehornoy15}, which is the graph $(\Sigma,D)$ with all self-loops omitted. Hence two distinct letters commute with each other if and only if they are not joined by an edge; see an example depicted on Fig.~\ref{fig:coacosn}.

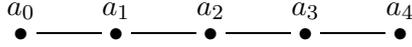
\begin{figure}[t]
  \centering
  $$
\xymatrix{\bullet\ar@{-}[r]\labelu{a_0}&\bullet\ar@{-}[r]\labelu{a_1}&\bullet\ar@{-}[r]\labelu{a_2}&\bullet\ar@{-}[r]\labelu{a_3}&\bullet\labelu{a_4}}
  $$
  \caption{Coxeter graph of the trace monoid $\M(\Sigma,I)$ with $\Sigma=\{a_0,\ldots,a_4\}$ and $(a_i,a_j)\in I\iff |i-j|\geq2$. The set of cliques is $\C=\{\vd,\;\mbox{$a_0,\ldots,a_4$},\;\mbox{$a_0\cdot a_2$},a_0\cdot a_3,\;a_0\cdot a_4,\;a_1\cdot a_3,\;a_1\cdot a_4,\;a_2\cdot a_4,\quad a_0\cdot a_2\cdot a_4\}$.}
  \label{fig:coacosn}
\end{figure}

A convenient representation of traces is provided by the identification of traces with the \emph{heaps of pieces} introduced in~\cite{viennot86}. Picture each letter as a piece falling to the ground, in such a way that distinct letters which commute with each other fall along parallel lines; whereas non commutative letters fall in such a way that they block each other. The heaps of pieces thus obtained are combinatorial object corresponding bijectively to the elements of the trace monoid, by reading the letters labelling the pieces from bottom to top. The cliques of the normal form of a trace correspond to the horizontal layers that appear in the heap of pieces. See an illustration on Fig.~\ref{fig:apoaspokzx}.

\begin{figure}[h]
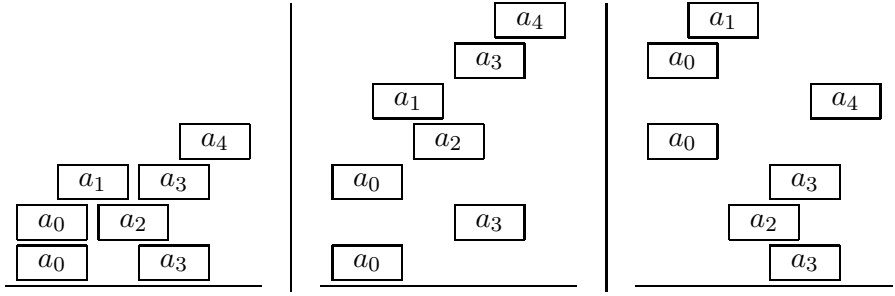

  \centering
    \begin{tabular}{c@{\quad}|@{\quad}c@{\quad}|@{\quad}c}
\xy
<.1em,0em>:
0="G",
"G"+(12,6)*{a_0},
"G";"G"+(24,0)**@{-};"G"+(24,12)**@{-};"G"+(0,12)**@{-};"G"**@{-},
(42,0)="G",
         "G"+(12,6)*{a_3},
"G";"G"+(24,0)**@{-};"G"+(24,12)**@{-};"G"+(0,12)**@{-};"G"**@{-},
(0,14)="G";         
         "G"+(12,6)*{a_0},
"G";"G"+(24,0)**@{-};"G"+(24,12)**@{-};"G"+(0,12)**@{-};"G"**@{-},
(28,14)="G";         
         "G"+(12,6)*{a_2},
"G";"G"+(24,0)**@{-};"G"+(24,12)**@{-};"G"+(0,12)**@{-};"G"**@{-},
(14,28)="G";         
         "G"+(12,6)*{a_1},
"G";"G"+(24,0)**@{-};"G"+(24,12)**@{-};"G"+(0,12)**@{-};"G"**@{-},
(42,28)="G";         
         "G"+(12,6)*{a_3},
"G";"G"+(24,0)**@{-};"G"+(24,12)**@{-};"G"+(0,12)**@{-};"G"**@{-},
(56,42)="G";         
         "G"+(12,6)*{a_4},
"G";"G"+(24,0)**@{-};"G"+(24,12)**@{-};"G"+(0,12)**@{-};"G"**@{-},
(-4,-2);(84,-2)**@{-}
\endxy
&
 \xy
<.1em,0em>:
0="G",
"G"+(12,6)*{a_0},
"G";"G"+(24,0)**@{-};"G"+(24,12)**@{-};"G"+(0,12)**@{-};"G"**@{-},
(42,14)="G",
         "G"+(12,6)*{a_3},
"G";"G"+(24,0)**@{-};"G"+(24,12)**@{-};"G"+(0,12)**@{-};"G"**@{-},
(0,28)="G";         
         "G"+(12,6)*{a_0},
"G";"G"+(24,0)**@{-};"G"+(24,12)**@{-};"G"+(0,12)**@{-};"G"**@{-},
(28,42)="G";         
         "G"+(12,6)*{a_2},
"G";"G"+(24,0)**@{-};"G"+(24,12)**@{-};"G"+(0,12)**@{-};"G"**@{-},
(14,56)="G";         
         "G"+(12,6)*{a_1},
"G";"G"+(24,0)**@{-};"G"+(24,12)**@{-};"G"+(0,12)**@{-};"G"**@{-},
(42,70)="G";         
         "G"+(12,6)*{a_3},
"G";"G"+(24,0)**@{-};"G"+(24,12)**@{-};"G"+(0,12)**@{-};"G"**@{-},
(56,84)="G";         
         "G"+(12,6)*{a_4},
"G";"G"+(24,0)**@{-};"G"+(24,12)**@{-};"G"+(0,12)**@{-};"G"**@{-},
(-4,-2);(84,-2)**@{-}
\endxy
&
 \xy
<.1em,0em>:
(0,70)="G",
"G"+(12,6)*{a_0},
"G";"G"+(24,0)**@{-};"G"+(24,12)**@{-};"G"+(0,12)**@{-};"G"**@{-},
(42,0)="G",
         "G"+(12,6)*{a_3},
"G";"G"+(24,0)**@{-};"G"+(24,12)**@{-};"G"+(0,12)**@{-};"G"**@{-},
(0,42)="G";         
         "G"+(12,6)*{a_0},
"G";"G"+(24,0)**@{-};"G"+(24,12)**@{-};"G"+(0,12)**@{-};"G"**@{-},
(28,14)="G";         
         "G"+(12,6)*{a_2},
"G";"G"+(24,0)**@{-};"G"+(24,12)**@{-};"G"+(0,12)**@{-};"G"**@{-},
(14,84)="G";         
         "G"+(12,6)*{a_1},
"G";"G"+(24,0)**@{-};"G"+(24,12)**@{-};"G"+(0,12)**@{-};"G"**@{-},
(42,28)="G";         
         "G"+(12,6)*{a_3},
"G";"G"+(24,0)**@{-};"G"+(24,12)**@{-};"G"+(0,12)**@{-};"G"**@{-},
(56,56)="G";         
         "G"+(12,6)*{a_4},
"G";"G"+(24,0)**@{-};"G"+(24,12)**@{-};"G"+(0,12)**@{-};"G"**@{-},
(-4,-2);(84,-2)**@{-}
\endxy
    \end{tabular}
    \caption{In this example the commutation relations are those of the Coxeter graph depicted on Fig.~\ref{fig:coacosn}. \textsl{Left:} representation as a heap of piece of the trace which normal form is $(a_0a_3,a_0a_2,a_1a_3,a_4)$.  \textsl{Middle and right:} representations of two words in the congruence class of the trace~$x$: $a_0$-$a_3$-$a_0$-$a_2$-$a_1$-$a_3$-$a_4$ (middle) and $a_3$-$a_2$-$a_3$-$a_0$-$a_4$-$a_0$-$a_1$ (right).}
  \label{fig:apoaspokzx}
\end{figure}

\subsection{Concurrent systems and their combinatorics}
\label{sec:conc-syst-their}

The background material presented in this section is borrowed from~\cite{abbes19:_markov,abbes20}.

\paragraph*{Concurrent systems and executions.}
\label{sec:basics}

A \emph{concurrent system} is a triple $(\M,X,\bot)$ where $\M$ is a trace monoid, $X$~is a finite set of \emph{states} and $\bot$~is a special symbol not in~$X$, together with a right monoid action of $\M$ on $X\cup\{\bot\}$, denoted by $(\alpha,x)\mapsto\alpha\cdot x$, and such that $\bot\cdot x=\bot$ for all $x\in\M$. By definition of a monoid action, one has thus $\alpha\cdot(x\cdot y)=(\alpha\cdot x)\cdot y$ for all $(\alpha,x,y)\in X\times\M\times \M$, and $\alpha\cdot\vd=\alpha$ for all $\alpha\in X$.

The concurrent system $\X$ is \emph{trivial} if $\alpha\cdot a=\bot$ for all $\alpha\in X$ and for all $a\in\Sigma$. It is \emph{non trivial} otherwise.

The symbol $\bot$ represents a sink state. So we are interested, for every $\alpha,\beta\in X$, in the following subsets of~$\M$:
\begin{align*}
  \M_{\alpha,\beta}&=\{x\in\M\tq \alpha\cdot x=\beta\},&\M_\alpha&=\{x\in\M\tq \alpha\cdot x\neq\bot\}.
\end{align*}

Traces of $\M_\alpha$ are called \emph{executions starting from~$\alpha$}, or \emph{executions} for short if the context is clear. Note that $\M_\alpha$ is always downward closed in $(\M,\leq)$.

\medskip
We introduce the following useful notations, for $\alpha,\beta\in X$:
\begin{align*} \Sigma_\alpha&=\Sigma\cap\M_\alpha&\C_\alpha&=\C\cap\M_\alpha&
\Cstar_\alpha&=\Cstar\cap\M_\alpha&                                                                              \C_{\alpha,\beta}&=\C\cap\M_{\alpha,\beta}
\end{align*}

A \emph{generalised execution from~$\alpha$} is an element $\xi\in\Mbar$ such that:
\begin{gather*}
  \forall x\in \M\quad x\leq\xi\implies x\in\M_\alpha.
\end{gather*}
Their set is denoted~$\Mbar_\alpha$, and we also put $\BM_\alpha=\Mbar_\alpha\cap\BM$.

As a running example for a ``general concurrent system'', we use the $1$-safe Petri net depicted in Fig.~\ref{fig:petrinetrs},~$(a)$. The underlying trace monoid is generated by the transitions, with commutative transitions $t$ and $t'$ whenever $^{\bullet}t^\bullet\cap{}^\bullet{t'}^\bullet=\emptyset$, thus $\M=\langle a,b,c,d\;|\; ad=da,\ db=db\rangle$. The corresponding Coxeter graph is depicted on Fig.~\ref{fig:petrinetrs},~$(b)$, and the graph of marking is depicted on Fig.~\ref{fig:petrinetrs},~$(c)$.

\begin{figure}
  \centering
  \begin{tabular}{c|c|c}
\begin{minipage}[c]{.6\textwidth}
  $$
  \xymatrix@C=.5em@M=.2em{
    &\place\markp\labelr{A}\POS[]\ar[dl]!U\ar[dr]!U\\
    \transh\ar@(u,l)[ur]\labell{a}&&\transh\ar[d]\labelr{b}\\
&    &\place\ar[dr]!U\labelr{B}&&\place\ar[dl]!U\ar[dr]!U!U\POS[]\markp\labell{C}\\
 &   &&\transh\POS!R\ar@(r,u)[ur]!D\POS[]\POS!L\ar@(l,d)[lluuu]!D\labeld{\strut c}&&\transh\ar@(u,r)[ul]\labeld{d}
    }
    $$
  \end{minipage}
     &
       \begin{minipage}[c]{.2\textwidth}
         $$
         \xymatrix@C=1em{
           \bullet\ar@{-}[r]\ar@{-}[d]\labelu{a}&\bullet\ar@{-}[dl]\labelu{b}\\
           \bullet\ar@{-}[d]\labelr{c}\\
           \bullet\labelr{d}
}
$$
\end{minipage}
&           \begin{minipage}[c]{.2\textwidth}
    $$
\xymatrix{*++[o][F-]{\alpha_0}%
\POS!L\ar@(l,ul)^{d}[]!U!L(.2)%
\POS!R\ar@(r,ur)_{a}[]!U!R(.2)%
\POS[]\ar@<1ex>^{b}[d]\ar@<1ex>^{c}[d];[]\\
  *++[o][F-]{\alpha_1}
\POS!L\ar@(l,dl)_{d}[]!D
    }
    $$
  \end{minipage}
    \\
    $(a)$&$(b)$&$(c)$\\
    \hline
\multicolumn{3}{c}{%
}\\
 \multicolumn{3}{c}{%
       \begin{minipage}[c]{\textwidth}
     $$
     \xymatrix@C=1em{
     & &&*+[F]{(\alpha_0,d)}\POS!R!D(.5)\ar@(dr,ur)[]!R!U(0.5)&&
\\     &*+[F]{(\alpha_0,ad)}\POS!U\ar[urr]!L\POS[]\ar[rrr]\POS!L\ar[dl]!U\POS[]\ar[d]\POS[]\POS!L!D(0.5)\ar@(dl,dr)[]!D!L(0.5)
      &&&*+[F]{(\alpha_0,bd)}\POS!D\ar[dl]!R\ar[ddl]!R&\\
      *+[F]{(\alpha_0,a)}\ar[r]\POS!L!D(0.5)\ar@(dl,dr)[]!D!L(0.5)
      &*+[F]{(\alpha_0,b)}
      &&*+[F]{(\alpha_1,c)}\POS!U!R(0.25)\ar[ur]!D!L\POS[]\ar@<1ex>[ll]\ar@<1ex>[ll];[]\POS!L!U\ar[ull]!R!D
      \POS[]\ar'[u][uu]\POS!D!L(0.5)\ar@(dl,dr)[lll]!D!R(0.5)
\\
&&&*+[F]{(\alpha_1,d)}\POS!L!D(0.5)\ar@(dl,dr)[]!D!L(0.5)\POS[]\ar[u]&
       }
       $$
     \end{minipage}
                                                                       }
     \\
 \multicolumn{3}{c}{%
 }    \\[1em]
 \multicolumn{3}{c}{%
     $(d)$
     }
  \end{tabular}                                                                              
  \caption{\small $(a)$---A safe Petri net with its initial marking $\alpha_0=\{A,C\}$ depicted. The two reachable markings are  $\alpha_0$ and $\alpha_1=\{B,C\}$.\quad$(b)$---The Coxeter graph of the associated trace monoid.\quad $(c)$---Graph of markings of the net.\quad $(d)$---Digraph of states-and-cliques of the associated concurrent system.}
  \label{fig:petrinetrs}
\end{figure}
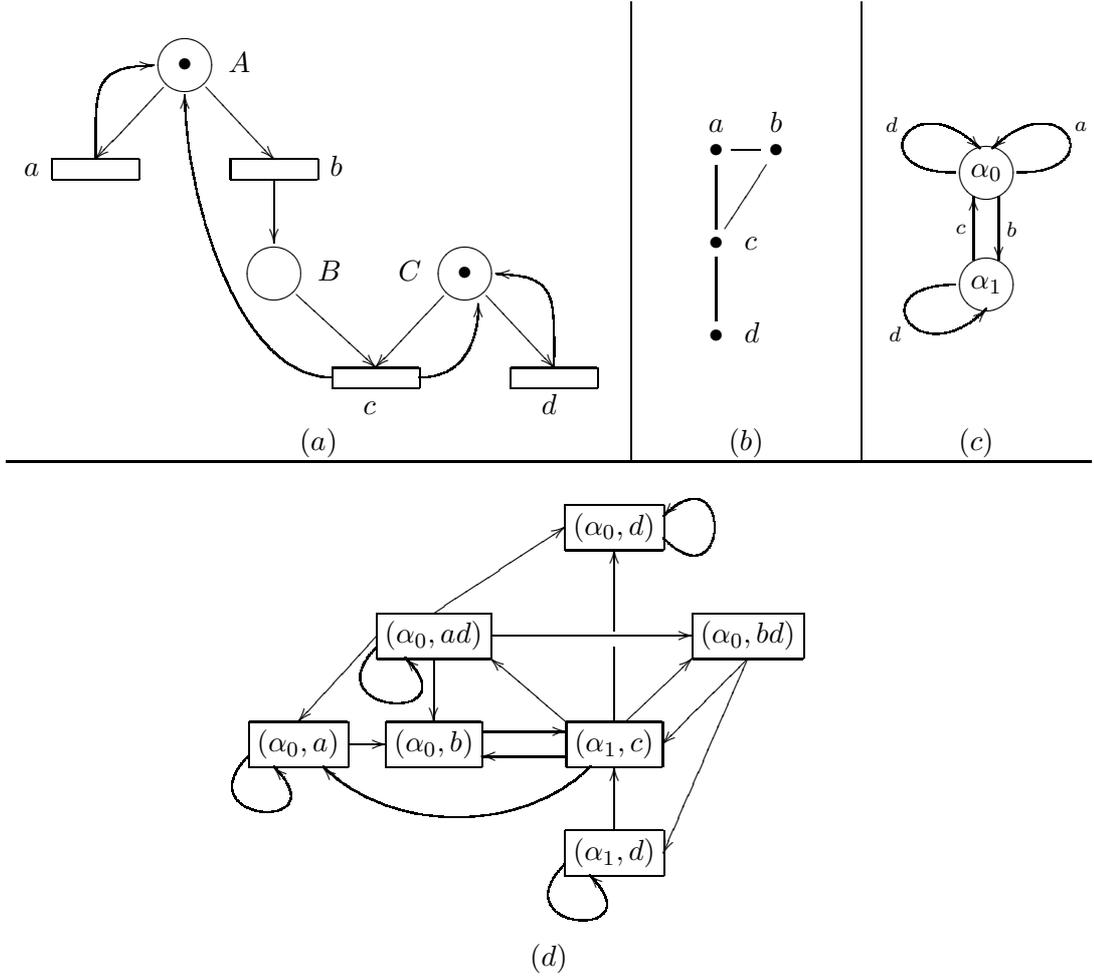

\paragraph*{Digraph of states-and-cliques.}
\label{sec:digr-stat-cliq}

Generalised executions of a concurrent system $\X=(\M,X,\bot)$ are generalised traces of~$\M$. As seen in Sect.~\ref{sec:generalised-traces}, generalised traces correspond to paths in the digraph of cliques $(\C,\to)$. Not all paths of $(\C,\to)$ however correspond, in general, to executions of~$\X$. In order to take into account the constraints induced by the monoid action, we introduce the \emph{digraph of states-and-cliques} $(\D,\to)$, the vertices of which are pairs $(\alpha,c)$ with $\alpha$ ranging over $X$ and $c$ ranging over~$\C_\alpha$. There is an arrow $(\alpha,c)\to(\beta,d)$ in $\D$ if $\beta=\alpha\cdot c$ and if $(c,d)$ is a normal pair of cliques.

To every generalised execution $\xi=(c_i)_{i\geq1}$ with $\xi\in\Mbar_\alpha$, is associated the path $(\alpha_{i-1},c_i)_{i\geq1}$ in~$\D$, where $\alpha_i$ is defined by $\alpha_0=\alpha$ and $\alpha_i=\alpha\cdot(c_1\cdot\ldots\cdot c_i)$ for $i\geq1$. We put $Y_i(\xi)=(\alpha_{i-1},c_i)$ for every integer $i\geq1$.

Conversely, every infinite path in $\D$ corresponds to a unique generalised execution. Consider the subgraph $\Dstar$ of $\D$ with all vertices of the form $(\alpha,c)$ with $c\neq\vd$. Then infinite paths in $\Dstar$ correspond bijectively to infinite executions.

For our running example, the digraph of states-and-cliques is depicted on Fig.~\ref{fig:petrinetrs},~$(d)$.

\paragraph*{Characteristic root.}
\label{sec:characteristic-root}

The combinatorics of a concurrent system $\X=(\M,X,\bot)$ involves not only the combinatorics of~$\M$, but also of the monoid action $X\times\M\to X$. Consider the \emph{M\"obius matrix} $\mu(z)=(\mu_{\alpha,\beta}(z))_{(\alpha,\beta)\in X\times X}$, the polynomial~$\theta(z)$, and the growth matrix $G(z)=(G_{\alpha,\beta}(z))_{(\alpha,\beta)\in X\times X}$ defined by:
\begin{align*}
  \mu_{\alpha,\beta}(z)&=\sum_{c\in\C_{\alpha,\beta}}(-1)^{|c|}z^{|c|}&\theta(z)&=\det\mu(z)&G_{\alpha,\beta}(z)=\sum_{x\in\M_{\alpha,\beta}}z^{|x|}
\end{align*}

Then $G(z)$ is a matrix of rational series, and it is the inverse of the M\"obius matrix: $G(z)\mu(z)=\text{Id}$. One of the roots of smallest modulus of the polynomial $\theta(z)$ is real and lies in $(0,1]\cup\{\infty\}$, with the convention that it is $\infty$ if $\theta(z)$ is a non zero constant. This non negative real or $\infty$ is the \emph{characteristic root} of the concurrent system~$\X$. The characteristic root $r$ is the minimum of all convergence radii of the generating series $G_{\alpha,\beta}(z)$, for $(\alpha,\beta)$ ranging over~$X\times X$.

For our running example, the M\"obius matrix is given by: \begin{gather*}
  \mu(z)=
  \begin{array}{c}
    \alpha_0\\\alpha_1
  \end{array}
  \begin{pmatrix}
    1-2z+z^2&-z+z^2\\
    -z&1-z
  \end{pmatrix}
\end{gather*}
with determinant $\theta(z)=(1-z)^2(1-2z)$. The characteristic root is thus $r=1/2$.

\paragraph*{Irreducibility and the spectral property.}
\label{sec:irreducibility}

A concurrent system $\X=(\M,X,\bot)$ is \emph{irreducible} if:
\begin{inparaenum}[1)]
\item The monoid $\M$ is irreducible;
\item $\M_{\alpha,\beta}\neq\emptyset$ for all $\alpha,\beta\in X$;
\item For every $\alpha\in X$ and for every letter $a\in \Sigma$ there exists $x\in\M_\alpha$ such that $a\in x$.
\end{inparaenum}

If $\Sigma'$ is any subset of~$\Sigma$, and if $\M'=\langle\Sigma'\rangle$ is the submonoid of $\M$ generated by~$\Sigma'$, then the restriction of the action $(X\cup\{\bot\})\times\M'\to X\cup\{\bot\}$ defines clearly a new concurrent system $\X'=(\M',X,\bot)$, said to be \emph{induced by restriction}. In particular, let $\X^a$ denote the concurrent system induced by restriction with $\Sigma'=\Sigma\setminus\{a\}$, and let $r^a$ be the characteristic root of~$\X^a$.

A key property, that we shall use later, is the \emph{spectral property}~\cite{abbes20} which states:\quad \emph{if $\X$ is irreducible, then $r^a>r$ for every  $a\in\Sigma$}.

The concurrent system in our running example from Fig.~\ref{fig:petrinetrs} is irreducible.

\paragraph*{Valuations and probabilistic valuations. Markov chain of states-and-cliques.}
\label{sec:valu-prob-valu}

A \emph{valuation} on a concurrent system $\X=(\M,X,\bot)$ is a family $f=(f_\alpha)_{\alpha\in X}$ of mappings $f_\alpha:\M\to\bbR_{\geq0}$ satisfying the three following properties:
\begin{gather}
  \label{eq:4}
  \forall \alpha\in X\quad\forall x\in\M\quad \alpha\cdot x=\bot\implies f_\alpha(x)=0\\
  \label{eq:5}
  \forall\alpha\in X\quad\forall x\in\M_\alpha\quad\forall y\in\M_{\alpha\cdot x}\quad f_{\alpha}(x\cdot y)=f_\alpha(x)f_{\alpha\cdot x}(y)\\
  \label{eq:6}
  \forall\alpha\in X\quad f_\alpha(\vd)=1
\end{gather}

Let $f=(f_\alpha)_{\alpha\in X}$ be a valuation and for each $\alpha\in X$, let $h_\alpha:\C\to\bbR$ be the M\"obius transform of the restriction $f_\alpha\rest\C:\C\to\bbR_{\geq0}$. Note first that $h_\alpha(x)=0$ if $x\notin\M_\alpha$. We say that $f$ is a \emph{probabilistic valuation} if:
\begin{gather}
  \label{eq:8}
  \forall\alpha\in X\quad\bigl(h_\alpha(\vd)=0\quad\land\quad(\forall c\in\Cstar_\alpha\quad h_\alpha(c)\geq0)\bigr).
\end{gather}

In this case, there exists a unique family $\nu=(\nu_\alpha)_{\alpha\in X}$, where $\nu_\alpha$ is a probability measure on~$\BM_\alpha$, such that $\nu_\alpha(\up x)=f_\alpha(x)$ for all $\alpha\in X$ and for all $x\in\M_\alpha$. Of course the existence of a probabilistic valuation implies in particular that $\BM_\alpha\neq\emptyset$, a property which might not be satisfied in general even if $\Sigma\neq\emptyset$.

If $\nu=(\nu_\alpha)_{\alpha\in X}$ is associated as above with a probabilistic valuation $f=(f_\alpha)_{\alpha\in X}$, then for each state $\alpha\in X$, and with respect to the probability measure~$\nu_\alpha$\,, the family of mappings $Y_i:\BM_\alpha\to\Dstar$ defined earlier is a homogeneous Markov chain, called the \emph{Markov chain of states-and-cliques}. Its initial distribution is given by $\un_{\alpha}\otimes h_\alpha$; hence in particular:
\begin{gather}
  \label{eq:9}
\forall\alpha\in X\quad\forall c\in\Cstar_\alpha\quad  \nu_\alpha(C_1=c)=h_\alpha(c).
\end{gather}

Let us determine all the probabilistic valuations for the running example of Fig.~\ref{fig:petrinetrs}. Any probabilistic valuation $f=(f_\alpha)_{\alpha\in X}$ is entirely determined by the \emph{finite} family of values $f_\alpha(u)$ for $(\alpha,u)$ ranging over $\{\alpha_0,\alpha_1\}\times \Sigma$, since then the other values $f_\alpha (x)$ are obtained by the chain rule $f_\alpha(xy)=f_\alpha(x)f_{\alpha\cdot x}(y)$. 

Since $f_{\alpha_0}(c)=f_{\alpha_1}(a)=f_{\alpha_2}(b)=0$, the remaining parameters for $f$ are $p=f_{\alpha_0}(a)$, $q=f_{\alpha_0}(b)$, $s=f_{\alpha_0}(d)$, $t=f_{\alpha_1}(c)$, $u=f_{\alpha_1}(d)$. The parameters are not independent; to cope with the commutativity relations induced by the trace monoid, one must have $f_{\alpha_0}(a)f_{\alpha_0\cdot a}(d)=f_{\alpha_0}(d)f_{\alpha_0\cdot d}(a)$,  since $ad=da$, and $f_{\alpha_0}(b)f_{\alpha_1}(d)=f_{\alpha_0}(d)f_{\alpha_0\cdot d}(b)$ since $bd=db$; yielding simply $u=s$ here.

\begin{table}[t]
  \centering
$  \begin{array}{c|c|c|c|c|c|c|c}
      \text{state $\alpha$}    &h_\alpha(\varepsilon)&h_\alpha(a)&h_\alpha(b)&h_\alpha(c)&h_\alpha(d)&h_\alpha(ad)&h_\alpha(bd)\\
      \hline
      \alpha_0&1-p-q-s+ps+qs
                &p-ps&q-qs&0&s-ps-qs&ps&qs\\
      \alpha_1&
                1-t-s
                                                     &0&0&t&s&0&0\\
    \end{array}
    $
  \caption{M\"obius tranform of a generic valuation for the running example depicted in Fig.~\ref{fig:petrinetrs}, with parameters $p=f_{\alpha_0}(a)$, $q=f_{\alpha_0}(b)$, $s=f_{\alpha_0}(d)=f_{\alpha_1}(d)$ and $t=f_{\alpha_1}(c)$.}
  \label{tab:moniuadsa}
\end{table}

The M\"obius tranform of $f_{\alpha_0}$ evaluated for instance at $b$ is $h_{\alpha_0}(b)=f_{\alpha_0}(b)-f_{\alpha_0}(bd)=f_{\alpha_0}(b)-f_{\alpha_0}(b)f_{\alpha_1}(d)=q-qs$. Other computations are done similarly, and we gather the results in Table~\ref{tab:moniuadsa}.
According to~(\ref{eq:8}), the normalization contraints on the parameters for the valuation $f$ to be probabilistic are thus:
\begin{align}
  \label{eq:10}
  1-p-q-s+ps+qs&=0,&
                1-t-s&=0,
\end{align}
plus all inequalities $h_{\alpha_0}(a)\geq0$, etc, which in this case amount to specify that all parameters vary between $0$ and~$1$. The second equality in~(\ref{eq:10}) is standard: since there is no concurrenycy enabled at~$\alpha_1$, the events of firing $c$ and $d$ are disjoint, hence their probabilities sum up to~$1$. The first equality in~(\ref{eq:10}) is less standrad. It takes into account the existence of concurrency enabled at~$\alpha_0$ and shows a degree greater than~$1$, resulting form the existence of cliques of order~$2$.

Here, the equality $h_{\alpha_0}(\varepsilon)=0$ rewrites as $(1-p-q)(1-s)=0$. It follows that, if $s\neq1$, then $1-p-q=0$ and therefore $h_{\alpha_0}(d)=s(1-p-q)=0$. Hence the node $(\alpha_0,d)$ is never reached, which meets well the inuition. We say that $(\alpha_0,d)$ is a \emph{null node}. See \cite{abbes20} for more details about the notion of null node.

\paragraph*{Representation of concurrent systems and of executions.}
\label{sec:repr-conc-syst}

To represent a concurrent system $\X=(\M,X,\bot)$, we first use the Coxeter graph of~$\M$, as in Fig.~\ref{fig:coacosn}. We also depict the \emph{labelled multigraph of states}, which vertices are the elements of~$X$, and with an edge from $\alpha$ to~$\beta$ labelled by the letter $a\in\Sigma$ if $\alpha\cdot a=\beta$, as in Fig.~\ref{fig:petrinetrs},~$(c)$. For representing executions, we stick to the representation by heaps of pieces introduced earlier for traces.

\begin{remark}
  Any multigraph $V$ with edges labelled by elements from a set $\Sigma$ represents an action of the free monoid $(V\cup\{\bot\})\times\Sigma^*\to(V\cup\{\bot\})$, provided that for any node $v\in V$, there is no two edges starting from $v$ and labelled with the same letter. It requires an additional verification to check that it also represents an action of a trace monoid $\M=\M(\Sigma,I)$ on~$V$; namely, one has to check that $\alpha\cdot(ab)=\alpha\cdot(ba)$ for any two letters $(a,b)\in I$.
\end{remark}

\subsection{A comparison result}
\label{sec:an-elem-comp}

In this subsection, we state an elementary lemma and its corollary, both belonging to trace theory, and given in a form slightly more general than precisely needed in the sequel. 

Consider an alphabet~$\Sigma$ and two independence relations $I$ and $I'$ on $\Sigma$ such that $I\subseteq I'$, and consider the two trace monoids $\M=\M(\Sigma,I)$ and $\N=\M(\Sigma,I')$. There is a natural surjection $\pi:\M\to\N$, which entails in particular that $\M$ is ``not smaller'' than~$\N$. It seems to have been unnoticed so far that, when restricted to the set of sub-traces of a given trace of~$\M$, or even of~$\Mbar$, then $\pi$ becomes injective. This is the topic of the following lemma.

The lemma generalises the following elementary fact. Let $\M=\Sigma^*$ be a free monoid and let $u\in\Sigma^*$. Then any prefix word $x\leq u$ is entirely determined by the collection $(n_a)_{a\in\Sigma}$ where $n_a$ is the number of occurrences of the letter $a$ in~$x$. Hence $x$ is entirely determined by its image in the free commutative monoid generated by~$\Sigma$.

\begin{lemma}
  \label{lem:1}
  Let $I\subseteq I'$ be two independence relations on an alphabet\/~$\Sigma$, let $\M=\M(\Sigma,I)$ and $\N=\M(\Sigma,I')$, and let $\pi:\M\to\N$ be the natural surjection. Then $\pi$ extends naturally to a surjection on generalised traces, as a mapping still denoted by $\pi:\Mbar\to\Nbar$. Let $\omega\in\Mbar$, and define:\quad
    $\Mbar_{\leq\omega}=\{x\in\Mbar\tq x\leq\omega\}$. Then the restriction of\/ $\pi$ to $\Mbar_{\leq\omega}$ is injective.
\end{lemma}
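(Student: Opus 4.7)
I would begin by extending $\pi$ to a well-defined mapping $\bar\pi\colon\Mbar\to\Nbar$. Given $\xi\in\Mbar$ with generalised normal form $(c_i)_{i\geq1}$, the sequence $(\pi(c_1\cdot\ldots\cdot c_n))_{n\geq1}$ is non-decreasing in $\N$ and therefore admits a supremum in $\Nbar$, which I take as the definition of $\bar\pi(\xi)$. Surjectivity follows because any $\eta\in\Nbar$ is the supremum of a chain of finite prefixes $\eta=\sup_n\eta_n$; each $\eta_n$ is the image under $\pi$ of some word of $\Sigma^*$, and the same word read in $\M$ produces an increasing chain whose supremum in $\Mbar$ projects to $\eta$.

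\textbf{Core argument.} For the injectivity statement, I would use the letter-count (abelianisation) map $\ell\colon\Mbar\to(\NNN\cup\{\infty\})^\Sigma$ sending $\xi$ to the tuple of its letter occurrences. Since $\pi$ fixes each letter of $\Sigma$, the map $\ell$ factors through $\bar\pi$, so $\bar\pi(x)=\bar\pi(y)$ forces $\ell(x)=\ell(y)$. Hence it suffices to prove that the restriction $\ell\colon\Mbar_{\leq\omega}\to(\NNN\cup\{\infty\})^\Sigma$ is itself injective. For this, I would invoke the heap-of-pieces representation from Section~\ref{sec:trace-monoids-their}: the generalised trace $\omega$ is encoded by a (possibly infinite) labelled poset $H(\omega)$, and the map $x\mapsto H(x)$ identifies $\Mbar_{\leq\omega}$ with the collection of order ideals of $H(\omega)$. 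Because the dependence relation is reflexive, any two pieces of $H(\omega)$ sharing a common label $a$ are comparable in the heap order; thus, for each letter $a\in\Sigma$, the $a$-labelled pieces form a chain $K_a\subseteq H(\omega)$. An order ideal $X\subseteq H(\omega)$ meets $K_a$ in a down-closed subset of this chain, which is entirely determined by its cardinality $\ell(x)_a$. Since $X=\bigcup_{a\in\Sigma}(X\cap K_a)$, the ideal $X$ is fully reconstructed from $\ell(x)$, and injectivity follows.

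\textbf{Conclusion and main obstacle.} Combining the two steps, $\bar\pi(x)=\bar\pi(y)$ for $x,y\in\Mbar_{\leq\omega}$ entails $\ell(x)=\ell(y)$, hence $x=y$. The expected source of technical work is the treatment of generalised traces: one needs to check that the sub-trace/order-ideal correspondence extends from the finite case to $\omega\in\BM$ (via compatibility with truncations), and that the chain argument remains valid when the chains $K_a$ are allowed to be countably infinite and the values $\ell(x)_a$ lie in $\NNN\cup\{\infty\}$. These are routine extensions of the finite-heap picture, but they are the only verifications that require care beyond the straightforward algebraic manipulations sketched above.
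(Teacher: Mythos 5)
Your proof is correct, but it takes a genuinely different route from the paper's. The paper argues directly on Cartier--Foata normal forms: writing $(c_i)$, $(d_i)$ and $(C_i)$ for the normal forms of $x$, of $\pi(x)$ and of $\omega$, it proves the identity $c_1=d_1\cap C_1$ (if $a\in d_1\cap C_1$ but $a\notin c_1$, the first occurrence of $a$ in $x$ sits in a higher clique, contradicting $x\leq\omega$ and $a\in C_1$), and then reconstructs $x$ from $\pi(x)$ and $\omega$ clique by clique by induction. You instead first reduce to the extreme case where $\N$ is the free commutative monoid---legitimate, since the abelianisation $\ell$ factors through every $\pi$ with $I\subseteq I'$, so injectivity of $\ell$ on $\Mbar_{\leq\omega}$ implies injectivity of $\pi$ there---and then settle that case by identifying $\Mbar_{\leq\omega}$ with the order ideals of the heap $H(\omega)$ and observing that, $D$ being reflexive, the $a$-labelled pieces form a chain, so an ideal is recovered from its letter counts. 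Your reduction is conceptually attractive: it shows the general $I\subseteq I'$ statement is no stronger than the ``elementary fact'' the paper states as motivation just before the lemma. What the paper's version buys is self-containedness and an explicit reconstruction formula using only the normal-form machinery of Section~2.1; your version leans on the prefix/order-ideal correspondence for possibly infinite heaps, which the paper introduces only as an informal picture. That correspondence is standard (Viennot), but, as you rightly flag, it is where the real work hides: checking that the embedding of $H(x)$ into $H(\omega)$ is canonical (the $k$-th occurrence of $a$ in $x$ maps to the $k$-th occurrence of $a$ in $\omega$, for every $a$) is essentially the same induction on layers that the paper carries out explicitly. So the verifications you defer as ``routine'' are real but doable, and the argument goes through.
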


\begin{proof}
  The extension of $\pi$ to a mapping $\Mbar\to\Nbar$ follows from the definitions, hence we focus on proving that the restriction of $\pi$ to $\Mbar_{\leq\omega}$ is injective. Let $x\in\Mbar_{\leq\omega}$ and let $y=\pi(x)$. Let $c_1$ be the first clique in the normal form of~$x$, and let $d_1$ be the first clique in the normal form of~$y$. Let also $C_1$ be the first clique in the normal form of~$\omega$. We assume with loss of generality that $x\neq\vd$ since $\pi^{-1} (\{\vd\})=\{\vd\}$.

  We claim that $c_1=d_1\cap C_1$. The inclusion $c_1\subseteq d_1\cap C_1$ is clear since both inclusions $c_1\subseteq d_1$ and $c_1\subseteq C_1$ are obvious. For proving the converse inclusion, seeking a contradiction, we assume that there is a letter $a\in d_1\cap C_1$ such that $a\notin c_1$. Then, since $y=\pi(x)$, the letter $a$ belongs to some higher clique in the normal form of~$x$. But, since $x\leq\omega$, and since $a\in C_1$, that entails that $a\in c_1$, contradicting the assumption $a\notin c_1$. Hence $c_1=d_1\cap C_1$, as claimed.

  Repeating inductively the same reasoning, with $x'=c_1 \backslash x$ and with $y'=\pi(x')=c_1\backslash y$ and $\omega'=c_1\backslash \omega$ in place of $x$ and of $y$ and of $\omega$ respectively\footnote{Recall that, if $c\leq u$ with $c,u\in\M$, we denote by $c\backslash u$ the left cancellation of $u$ by~$c$, which is the unique trace $v\in\M$ such that $c\cdot v=u$.}, we see that all the cliques $(c_i)_{i\geq1}$ of the generalised trace $x$ can be reconstructed from~$y$. This entails that $\pi$ is injective.
\end{proof}

\begin{corollary}
  \label{cor:1}
  Let $\M$ be a trace monoid, and let $\omega\in\BM$ be an infinite trace. For each integer $n\geq0$, consider:
  \begin{align*}
    \M_{\leq\omega}(n)&=\{x\in\M\tq x\leq\omega\land |x|=n\},& p_n&=\#\M_{\leq\omega}(n).
  \end{align*}
Then there is a polynomial $P\in\bbZ[X]$ such that $p_n\leq P(n)$ for all integers~$n$. Furthermore, the set $\BM_{\leq\omega}=\{\xi\in\BM\tq \xi\leq\omega\}$ is at most countable. The polynomial $P$ only depends on~$\M$, and not on~$\omega$.
\end{corollary}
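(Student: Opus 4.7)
The plan is to apply Lemma~\ref{lem:1} with the specific choice $I' = (\Sigma \times \Sigma) \setminus \{(a,a) \tqs a \in \Sigma\}$, the maximal independence relation on~$\Sigma$, so that $\N = \M(\Sigma, I')$ is the free commutative monoid on~$\Sigma$; clearly $I \subseteq I'$. Lemma~\ref{lem:1} then gives a natural projection $\pi : \Mbar \to \Nbar$ whose restriction to $\Mbar_{\leq\omega}$ is injective. Crucially, $\pi$ preserves the number of occurrences of every letter, and in particular preserves the length on finite traces.

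First I would establish the polynomial bound. By length-preservation and the injectivity of $\pi$ on $\Mbar_{\leq\omega}$, one has $p_n \leq \#\{y \in \N \tqs |y| = n\}$. Elements of the free commutative monoid $\N$ on $N = |\Sigma|$ generators of length $n$ are in bijection with multisets of size $n$ over~$\Sigma$, whose number is $\binom{n + N - 1}{N - 1}$ by the classical stars-and-bars formula. Taking $P(X) = \binom{X + N - 1}{N - 1}$, a polynomial in $\bbZ[X]$ of degree $N - 1$ depending only on $\Sigma$ and hence only on~$\M$, we obtain $p_n \leq P(n)$ for all $n \geq 0$.

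For the countability of $\BM_{\leq\omega}$, the same injectivity restricts to an injection $\BM_{\leq\omega} \hookrightarrow \Nbar$. In the free commutative monoid~$\N$, the cliques are precisely the subsets of $\Sigma$, and since the dependence relation is just the diagonal, the normal-sequence condition $c \to c'$ collapses to $c' \subseteq c$; hence any generalised trace of $\N$ is a weakly decreasing sequence of subsets of~$\Sigma$, entirely determined by the tuple $(n_a)_{a \in \Sigma} \in (\NNN \cup \{\infty\})^\Sigma$ of letter multiplicities. This identifies $\Nbar$ with the countable set $(\NNN \cup \{\infty\})^\Sigma$, which is countable because $\Sigma$ is finite, so $\BM_{\leq\omega}$ is at most countable. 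The substantive work is carried by Lemma~\ref{lem:1}; beyond its application, the only point needing care is the concrete description of $\Nbar$ as the set of multiplicity tuples, everything else being elementary combinatorics.
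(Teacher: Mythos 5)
Your proposal is correct and follows essentially the same route as the paper: specialise Lemma~\ref{lem:1} to the free commutative monoid $\N=\M(\Sigma,I')$ with $I'$ maximal, deduce $p_n\leq\#\N(n)$ from length-preserving injectivity, and identify $\Nbar$ with tuples of letter multiplicities in $(\NNN\cup\{\infty\})^\Sigma$ to get countability. You merely spell out the stars-and-bars count and the collapse of the normal form in $\N$ more explicitly than the paper does.
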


\begin{proof}
  Let $\M=\M(\Sigma,I)$ and let $\N$ be the free commutative monoid generated by~$\Sigma$, \ie, $\N=\M(\Sigma,I')$ with $I'=(\Sigma\times\Sigma)\setminus\Delta$ and $\Delta=\{(x,x)\; :\ x\in\Sigma\}$.

  For each integer~$n$, let $q_n=\#\N(n)$. Then it is well known that $q_n=P(n)$ for some polynomial $P\in\bbZ[X]$ (a short proof based on the M\"obius inversion formula was given in Sect.~\ref{sec:growth-series-mobius}). Since $I\subseteq I'$, it follows  from Lemma~\ref{lem:1} that $p(n)\leq q(n)$.

  Furthermore, $\Nbar$~itself is at most countable since $\Nbar$ identifies with:
  \begin{gather*}
    \Nbar\sim\bigl\{(x_i)_{i\in\Sigma}\tq x_i\in\bbZ_{\geq0}\cup\{\infty\},\quad\exists i\in\Sigma\quad x_i=\infty
    \bigr\}.
  \end{gather*}
Hence, the fact that $\BM_{\leq\omega}$ is at most countable also follows from Lemma~\ref{lem:1}.
\end{proof}

\begin{remark}
  Of course, the direct argument:
  \begin{gather*}
\BM_{\leq\omega}\subseteq\bigl\{\xi\in\Cstar^{\bbZ_{\geq1}}\tq \forall i\geq1\quad C_i(\xi)\subseteq C_i(\omega)\bigr\}
\end{gather*}
would not allow to conclude as in Corollary~\ref{cor:1} that $\BM_{\leq\omega}$ is at most countable.
\end{remark}


\section{Deterministic concurrent systems}
\label{sec:determ-conc-syst}

\begin{definition}
  A \emph{deterministic concurrent system (\DCS)} is a concurrent system $\X=(\M,X,\bot)$ such that for every state $\alpha\in X$, the partial order $(\M_\alpha,\leq)$ is a lattice.
\end{definition}

\begin{remark}
\label{rem:1} According to the background on \lub\ and \glb\ on trace monoids recalled in Section~\ref{sec:lower-upper-bounds} on the one hand, and since $\M_\alpha$ is a downward closed subset of~$\M$ on the other hand, we have for any two executions $x,y\in\M_\alpha$:
  \begin{inparaenum}[1)]
  \item $x$~and $y$ have a \glb\ in~$\M_\alpha$, which coincides with their \glb\ in~$\M$; and
  \item $x$~and $y$ have a \lub\ in $\M_\alpha$ if and only they have a common upper bound in~$\M_\alpha$, in which case their \lub\ in $\M_\alpha$ coincides with their \lub\ in~$\M$.
  \end{inparaenum}
Note however that the existence of $x\vee y$ in $\M$ is not enough to insure that $x\vee y\in\M_\alpha$.

  Henceforth, a concurrent system $(\M,X,\bot)$ is a \DCS\ if and only if, for every state~$\alpha$, any two executions $x,y\in\M_\alpha$ have a common upper bound in~$\M_\alpha$.
\end{remark}

The following result says that \DCS\ correspond to ``locally commutative'' concurrent systems.

\begin{proposition}
  \label{prop:1}
  Let $\X=(\M,X,\bot)$ be a concurrent system. Then the following properties are equivalent:
  \begin{compactenum}[\normalfont(i)]
  \item\label{item:15} $\X$ is deterministic.
  \item\label{item:16} For every $\alpha\in X$, the partial order $(\C_\alpha,\leq)$ is a lattice.
  \item\label{item:17} For every $\alpha\in X$, any two letters in $\Sigma_\alpha$ commute with each other.
  \end{compactenum}
\end{proposition}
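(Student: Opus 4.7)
My plan is to prove the cycle (i) $\Rightarrow$ (ii) $\Rightarrow$ (iii) $\Rightarrow$ (i). The first two implications are direct, resting on the combinatorics of cliques recalled in Section~\ref{sec:lower-upper-bounds}; the last is the main technical step.

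For (i) $\Rightarrow$ (ii), fix $c, d \in \C_\alpha$. Their glb in $\M$ is $c \cap d$ (viewing cliques as subsets of $\Sigma$), still a clique, and lies in $\M_\alpha$ by downward closedness, hence in $\C_\alpha$. The lub $c \vee d$ exists in $\M_\alpha$ by (i) and therefore in $\M$; two cliques admitting a common upper bound in $\M$ must be parallel, and their lub is then $c \cup d$, again a clique, so it lies in $\C_\alpha$. For (ii) $\Rightarrow$ (iii), two distinct letters $a, b \in \Sigma_\alpha$ belong to $\C_\alpha$ and admit a join there by (ii); the existence of a join of two distinct letters in $\M$ forces them to be parallel, i.e., $(a, b) \in I$.

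For (iii) $\Rightarrow$ (i), I would proceed by induction on $|x|+|y|$ to show, via Remark~\ref{rem:1}, that any $x, y \in \M_\alpha$ admit a common upper bound in $\M_\alpha$. Write $x = a \cdot x'$ with $a \in \Sigma_\alpha$. If $a \leq y$, split $y = a \cdot y'$, apply the induction hypothesis to $(x', y')$ in $\M_{\alpha \cdot a}$ (where (iii) still holds for all reachable states), and prepend $a$ to the resulting common upper bound. The delicate case is $a \not\leq y$; here the strategy is to establish a sub-lemma stating that, under (iii), the letter $a$ commutes with all of $y$ in $\M$ and remains enabled throughout, so that $y \in \M_{\alpha \cdot a}$. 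Granted this sub-lemma, the outer induction hypothesis applied to $(x', y)$ in $\M_{\alpha \cdot a}$ produces a common upper bound whose $a$-prefix is a common upper bound of $x$ and $y$ in $\M_\alpha$.

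The main obstacle is precisely this sub-lemma, which bridges algebraic commutation to operational enabledness: starting from $(a, b) \in I$ at the head of $y = b \cdot y'$ (obtained from (iii) at $\alpha$), one must show that $a$ is enabled at $\alpha \cdot b$, and then iterate this transport along the entire execution $y$. Its proof is a secondary induction on $|y|$ exploiting (iii) applied at every intermediate state, combined with the monoid-action identity $\alpha \cdot ab = \alpha \cdot ba$ for commuting letters. It is precisely this repeated use of (iii) at every reachable state, not just at $\alpha$, that gives the hypothesis its full strength and converts the local commutation in $\M$ into the lattice property of $\M_\alpha$.
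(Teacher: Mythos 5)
Your cycle differs from the paper's. The paper takes (ii) as the working hypothesis and proves (ii)$\Rightarrow$(i) by induction on the Cartier--Foata layers: after reducing to the case $x\wedge y=\vd$, it forms $(c_1\vee d_1)\cdot(c_2\vee d_2)\cdots$ using the lattice property of $\C_{\alpha_i}$ at each successive state, and it dismisses (i)$\Rightarrow$(iii) and (ii)$\Leftrightarrow$(iii) as clear. You instead make (iii)$\Rightarrow$(i) the main step, peeling one letter at a time. This structural difference matters, because your sub-lemma sits exactly where the argument breaks.

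The gap is that (iii) does not let you conclude that $a$ is enabled at $\alpha\cdot b$. From $a,b\in\Sigma_\alpha$ and $(a,b)\in I$ you only get the identity $\alpha\cdot(a\cdot b)=\alpha\cdot(b\cdot a)$ between two elements of $X\cup\{\bot\}$, and nothing in the definition of a concurrent system prevents both sides from being~$\bot$. Concretely, take $\M=\langle a,b\mid ab=ba\rangle$, $X=\{\alpha,\beta\}$, with $\alpha\cdot a=\alpha\cdot b=\beta$ and $\beta\cdot a=\beta\cdot b=\bot$: the two generator maps commute on $X\cup\{\bot\}$, so this is a legitimate concurrent system satisfying (iii) at every state, yet $\M_\alpha=\{\vd,a,b\}$ is not a lattice. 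So your sub-lemma is not merely delicate, it is false as stated, and no secondary induction will rescue it from hypothesis (iii) alone. (The same difficulty is hidden in the paper's assertion that (ii)$\Leftrightarrow$(iii) is ``clear'': the direction (iii)$\Rightarrow$(ii) is precisely the claim that commuting enabled letters are jointly enabled.) The repair is to run your induction from (ii) instead: the lattice property of $\C_\alpha$ hands you $a\vee b=a\cdot b\in\C_\alpha$, hence $a\in\Sigma_{\alpha\cdot b}$, and your secondary induction on $|y|$ then closes (note that $a\not\leq y$ together with $(a,b)\in I$ forces $a\not\leq y'$, so the invariant propagates). With that substitution your letter-by-letter argument is a valid alternative to the paper's normal-form argument. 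Your steps (i)$\Rightarrow$(ii) and (ii)$\Rightarrow$(iii) are essentially fine, up to the small slip that two cliques with a common upper bound need not be ``parallel'' in the paper's sense when they share a letter ($I$ is irreflexive); what you actually need is only that a common upper bound of $c,d\in\C_\alpha$ forces $c\cup d$ to be a clique lying in $\C_\alpha$.
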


\begin{proof}
  The equivalence $\text{(\ref{item:16})}\iff\text{(\ref{item:17})}$ and the implication $\text{(\ref{item:15})}\implies\text{(\ref{item:17})}$ are clear. The interesting point is the implication $\text{(\ref{item:16})}\implies\text{(\ref{item:15})}$.

  Assume that $(\C_\alpha,\leq)$ is a lattice for every $\alpha\in X$. Fix $\alpha\in X$ and let $x,y\in\M_\alpha$. Assume first that $x\wedge y=\vd$. Let $(c_1,\ldots,c_k)$ and $(d_1,\ldots,d_m)$ be the normal forms of $x$ and of~$y$. Maybe by adding the empty trace at the tail of one or the other normal form, we assume that $k=m$, at the cost of tolerating that some of the elements may be the empty trace.

  On the one hand, since $c_1\cdot c_2$ is an execution starting from~$\alpha$, one has $c_2\in\C_{\alpha\cdot c_1}$. On the other hand, both $c_1$ and $d_1$ belong to~$\C_\alpha$, which is a lattice by assumption. Hence $c_1\vee d_1\in\C_\alpha$. And since $c_1\wedge d_1=\vd$ by assumption, one has $c_1\vee d_1=c_1\cdot d_1=d_1\cdot c_1$. Therefore: $d_1\in\C_{\alpha\cdot c_1}$. Since both cliques $c_2$ and $d_1$ belong to~$\C_{\alpha\cdot c_1}$, which is a lattice, it follows that $c_2\vee d_1\in\C_{\alpha\cdot c_1}$.

  Now we claim that $c_2\wedge d_1=\vd$. Otherwise, there exists a letter $a$ occurring in both $c_2$ and~$d_1$. Since $(c_1,c_2)$ is a normal pair of cliques, there exists $b\in c_1$ such that $(a,b)\in D$, the dependence pair of the monoid. Because of the assumption $c_1\wedge d_1=\vd$, the identity $a=b$ is impossible. But both $a$ and $b$ belong to~$\Sigma_\alpha$, and since $a\neq b$, the fact that $(a,b)\in D$ contradicts that $\C_\alpha$ is a lattice; our claim is proved.

  We have obtained that $c_2\vee d_1$ exists in $\C_{\alpha\cdot c_1}$ and that $c_2\wedge d_1=\vd$. Hence $c_2\vee d_1=c_2\cdot d_1=d_1\cdot c_2$. It implies that $c_2\in\C_{\alpha\cdot(c_1\vee d_1)}$. Symmetrically, we obtain that $d_2\in\C_{\alpha\cdot (c_1\vee d_1)}$. Since $\C_{\alpha\cdot(c_1\vee d_1)}$ is a lattice, it follows that $d_2\vee c_2\in\C_{\alpha\cdot(c_1\vee d_1)}$. But again,  $d_2\wedge c_2=\vd$ hence $d_2\vee c_2=d_2\cdot c_2=c_2\cdot d_2$. Therefore we obtain that the following trace belongs to~$\M_\alpha$:
    \begin{gather*}
      (c_1\vee d_1)\cdot(c_2\vee d_2)=(c_1\cdot c_2)\cdot(d_1\cdot d_2)=(d_1\cdot d_2)\cdot(c_1\cdot c_2).
    \end{gather*}
    Repeating inductively the same reasoning, we finally obtain that $x\cdot y=y\cdot x\in\M_\alpha$, hence providing a common upper bound of $x$ and of $y$ in~$\M_\alpha$. This proves the existence of $x\vee y$ in $\M_\alpha$ in the case where $x\wedge y=\vd$.

    The general case follows by considering $x'=(x\wedge y)\backslash x$ and $y'=(x\wedge y)\backslash y$ instead of $x$ and~$y$.
\end{proof}

\begin{remark}
In a \DCS, for each state $\alpha\in X$, the partially ordered set of cliques $(\C_\alpha,\leq)$ identifies with the powerset $(\P(\Sigma_\alpha),\subseteq)$. In particular $\C_\alpha$ has a maximum $c_\alpha=\max(\C_\alpha)=\bigvee\Sigma_\alpha$, given by:\quad $c_\alpha=\Sigma_\alpha$. We keep this notation in the statement of the following lemma.
\end{remark}

\begin{lemma}
  \label{lem:3}
  Let $\X=(\M,X,\bot)$ be a deterministic concurrent system, and let  $\alpha\in X$. Let $T_\alpha=(c_{i})_{i\geq1}$ be the sequence of cliques defined by $c_{1}=c_\alpha$, and inductively by $c_{i+1}=c_{\alpha_{i}}$ where $\alpha_i=\alpha\cdot (c_{1}\cdot\ldots\cdot c_{i})$. Then $T_\alpha$ is a generalised execution which is the maximum of\/ $(\Mbar_\alpha,\leq)$.
\end{lemma}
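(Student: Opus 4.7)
I structure the proof into two parts: (i) that $T_\alpha$ lies in $\Mbar_\alpha$, and (ii) that every $\xi \in \Mbar_\alpha$ is dominated by $T_\alpha$. Part~(i) requires checking that each $c_i$ is a clique (immediate from Proposition~\ref{prop:1}), that every prefix $v_k := c_1 \cdots c_k$ satisfies $\alpha \cdot v_k \neq \bot$ (automatic by the recursive construction of the $\alpha_i$), and that each pair $(c_i, c_{i+1})$ is normal. The last point is the only subtle one: if $c_i = \vd$ normality holds vacuously, and otherwise, supposing some $b \in c_{i+1} = \Sigma_{\alpha_i}$ is independent of every letter of $c_i = \Sigma_{\alpha_{i-1}}$, the identity $c_i \cdot b = b \cdot c_i$ places $b \cdot c_i$ in $\M_{\alpha_{i-1}}$, forcing $b \in \Sigma_{\alpha_{i-1}} = c_i$ by downward closure, in contradiction with the irreflexivity of~$I$.

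For part~(ii), I will prove by induction on $k \geq 0$ the auxiliary claim that every $x \in \M_\alpha$ with $\tau(x) \leq k$ satisfies $x \leq v_k$. Granting this, any $\xi \in \Mbar_\alpha$ has prefixes $\xi_k := C_1(\xi) \cdots C_k(\xi) \in \M_\alpha$ of height $\leq k$, hence $\xi_k \leq v_k$, which reads off as $C_i(\xi) \leq c_i$ for every $i$, and therefore $\xi \leq T_\alpha$ in $\Mbar$.

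The base case is trivial and, in the inductive step, the case $\tau(x) < k$ reduces to $v_{k-1} \leq v_k$; so assume $x$ has normal form $(d_1, \ldots, d_k)$ and set $x' := d_1 \cdots d_{k-1}$. The inductive hypothesis gives $x' \leq v_{k-1}$; writing $\beta := \alpha \cdot x'$ and $s := x' \backslash v_{k-1} \in \M_\beta$ (so that $\beta \cdot s = \alpha_{k-1}$), left cancellation reduces $x \leq v_k$ to the inequality $d_k \leq s \cdot c_k$. Using the lattice property of $\M_\beta$ (Remark~\ref{rem:1}), the \lub\ $e := s \vee d_k$ exists in $\M_\beta$; setting $r := s \backslash e$, the desired inequality follows at once from $r \leq c_k$.

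The heart of the matter is this last sub-claim, which is where I expect the main difficulty. I invoke the classical trace-monoid identity
\[
s \backslash (s \vee d_k) \;=\; (s \wedge d_k) \backslash d_k,
\]
a standard consequence of Levi's lemma for trace monoids (the complements of $s \wedge d_k$ inside $s$ and $d_k$ being forced to be parallel). Since $s \wedge d_k \leq d_k$ is automatically a sub-clique of the clique $d_k$, the right-hand side is the complementary sub-clique and in particular a clique; on the other hand, $r$ lies in $\M_{\alpha_{k-1}}$ by the factorisation $e = s \cdot r$, so $r$ is a clique in $\C_{\alpha_{k-1}}$, hence $r \leq c_k = \max \C_{\alpha_{k-1}}$. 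Thus the lattice property of $\M_\beta$ by itself is not enough; what closes the argument is the finer trace-monoid \lub\ formula which identifies $r$ with a genuine sub-clique of~$d_k$.
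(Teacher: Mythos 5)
Your proof is correct, and it takes a genuinely different route from the paper's. For the maximality part the paper applies the lattice property once, globally: given $x\in\M_\alpha$ of height~$k$ it forms $z=x\vee(c_1\cdots c_k)$ in~$\M_\alpha$, writes its normal form $(e_1,\ldots,e_k)$, and concludes $e_j=c_j$ for every~$j$ by maximality of $c_j$ in $\C_{\alpha_{j-1}}$ (an implicit induction along the normal form), whence $x\leq c_1\cdots c_k$. That argument is shorter, but it silently uses two standard facts: that $x\leq z$ can be read clique-wise off the normal forms, and that the join of two traces of height~$k$ again has height~$k$. You instead induct on the height, reduce by left cancellation to the single-clique inequality $d_k\leq s\cdot c_k$, and close the gap with the Levi-type identity $s\backslash(s\vee d_k)=(s\wedge d_k)\backslash d_k$. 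You are right that this identity is the crux: the lattice property alone only produces \emph{some} $r$ with $d_k\leq s\cdot r$, and it is the identification of $r$ with the complementary sub-clique of $d_k$ that makes $r$ a clique of $\C_{\alpha_{k-1}}$, hence $\leq c_k=\max\C_{\alpha_{k-1}}$. The identity is indeed standard (it follows from the dependence-graph description of joins: if $s'\wedge d'=\vd$ and $s'\vee d'$ exists, the two traces are letterwise independent), so your argument is complete; it buys self-containedness at each step at the cost of length. You also spell out the normality of the pairs $(c_i,c_{i+1})$, which the paper dispatches in a footnote with essentially your argument; the only imprecision is the claim that normality is ``vacuous'' when $c_i=\vd$ --- in fact $\vd\to c_{i+1}$ forces $c_{i+1}=\vd$, which your construction does guarantee since $c_i=\vd$ implies $\alpha_i=\alpha_{i-1}$.
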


\begin{proof}
  We first observe that, for $c_\alpha$ the maximum of~$\C_\alpha$, then $c_\alpha\to y$ holds\footnote{This actually holds for any concurrent system, not necessarily deterministic, if $c_\alpha$ is taken to be any maximal element in~$\C_\alpha$.} for every clique $y\in\C_{\alpha\cdot c_\alpha}$. Here in particular, $c_i\to c_{i+1}$ holds for all $i\geq1$, hence $T_\alpha$ is indeed a generalised execution.
  
  Let $x\in\Mbar_\alpha$, with $x=(d_i)_{i\geq1}$. We prove that $x\leq T_\alpha$. Assume first that $x$ is a finite trace, of height $k=\height(x)$. Put $y=c_1\cdot\ldots\cdot c_k$. Then $x$ and $y$ belong to~$\M_\alpha$. Hence $z=x\vee y$ exists in~$\M_\alpha$. Let $(e_1,\ldots,e_k)$ be the normal form of~$z$ (since $x$ and $y$ have the same height~$k$, $z$ also has height~$k$). Then $c_j\leq e_j$ and thus $c_j=e_j$ for all~$j$ by maximality of~$c_j$. Hence $d_j\leq c_j$ for all~$j$, which was to be proved.

  If $x=(c_i)_{i\geq1}$ is now a generalised trace, we obtain the same result by applying the previous case to all sub-traces $(c_i)_{1\leq i\leq k}$.
\end{proof}

Let us introduce a name for a valuation that will play a special role.

\begin{definition}
  \label{def:2}
  Let $\X=(\M,X,\bot)$ be a concurrent system. The valuation $f=(f_\alpha)_{\alpha\in X}$ defined by:
  \begin{gather*}
    \forall\alpha\in X\quad \forall x\in\M\quad f_\alpha(x)=
    \begin{cases}
      1,&\text{if $x\in\M_\alpha$}\\
      0,&\text{otherwise}
    \end{cases}
  \end{gather*}
is called the \emph{dominant valuation} of~$\X$.
\end{definition}

The family $f=(f_\alpha)_{\alpha\in X}$ given in Def.~\ref{def:2} is indeed a valuation. Indeed, using the axioms of the monoid action and the additional assumption $\bot\cdot z=\bot$ for all $z\in\M$, one sees that the following equivalence is true for every $\alpha\in X$ and for every traces $x,y\in\M$:
\begin{gather*}
  \alpha\cdot (x\cdot y)\neq\bot\iff(\alpha\cdot x\neq\bot\land (\alpha\cdot x)\cdot y\neq\bot),
\end{gather*}
which translates at once as the identity $f_\alpha(x\cdot y)=f_\alpha(x)f_{\alpha\cdot x}(y)$.

\begin{theorem}
  \label{thr:1}
  Let $\X=(\M,X,\bot)$ be a non trivial concurrent system.
  \begin{enumerate}
  \item\label{item:12} If\/ $\Sigma_\alpha\neq\emptyset$ for all $\alpha\in X$, then the two following statements are equivalent:
  \begin{compactenum}[\normalfont(i)]
  \item\label{item:1} $\X$ is deterministic.
  \item\label{item:2} The dominant valuation of $\X$ is probabilistic.
  \end{compactenum}
\item\label{item:13} If $\X$ is deterministic, then all sets~$\BM_\alpha$, for $\alpha\in X$,  are at most countable and the characteristic root of $\X$ is $r=1$ or $r=\infty$.
  \end{enumerate}
\end{theorem}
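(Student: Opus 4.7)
My plan for part~\eqref{item:12} is to treat each implication directly. For (\ref{item:1})$\Rightarrow$(\ref{item:2}), I would use Proposition~\ref{prop:1}(iii) to identify $(\C_\alpha,\leq)$ with the Boolean lattice $(\P(\Sigma_\alpha),\subseteq)$, and then compute the M\"obius transform of the dominant valuation by inclusion-exclusion: for $c\in\C$ with $c\subseteq\Sigma_\alpha$,
\begin{gather*}
h_\alpha(c)=\sum_{c\subseteq c'\subseteq\Sigma_\alpha}(-1)^{|c'|-|c|},
\end{gather*}
which equals $1$ if $c=\Sigma_\alpha$ and $0$ otherwise; if $c\not\subseteq\Sigma_\alpha$, no superset of $c$ lies in $\C_\alpha$, hence $h_\alpha(c)=0$. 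Since $\Sigma_\alpha\neq\emptyset$ one has $\vd\neq\Sigma_\alpha$, so $h_\alpha(\vd)=0$ and $h_\alpha\geq0$, verifying condition~\eqref{eq:8}.

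For the converse (\ref{item:2})$\Rightarrow$(\ref{item:1}), I would invoke the probability measure $\nu_\alpha$ on $\BM_\alpha$ attached to the dominant valuation. Since $f_\alpha(x)=1$ for every $x\in\M_\alpha$, one has $\nu_\alpha(\up x)=1$, and by $\sigma$-additivity $\nu_\alpha(\up x\cap\up y)=1$ for any $x,y\in\M_\alpha$. In particular the intersection is non-empty: some $\omega\in\BM_\alpha$ extends both $x$ and $y$, so $x\vee y$ exists in $\M$ and, being a prefix of $\omega\in\Mbar_\alpha$, lies in $\M_\alpha$. Remark~\ref{rem:1} then gives that $\X$ is a \DCS. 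This is the step I expect to be the main obstacle, as it is the only place where a combinatorial conclusion about $\M_\alpha$ is drawn from purely measure-theoretic input, and it quietly relies on $\BM_\alpha\neq\emptyset$, which the existence of $\nu_\alpha$ itself guarantees.

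For part~\eqref{item:13}, the plan is to exploit the maximum $T_\alpha$ of $(\Mbar_\alpha,\leq)$ provided by Lemma~\ref{lem:3}. If $\Sigma_\alpha=\emptyset$ or $T_\alpha$ is a finite trace, then $\M_\alpha$ is contained in the finite set of divisors of $T_\alpha$ and $\BM_\alpha=\emptyset$. Otherwise $T_\alpha\in\BM$, and Corollary~\ref{cor:1} applied to $\omega=T_\alpha$ shows both that $\BM_\alpha\subseteq\BM_{\leq T_\alpha}$ is at most countable and that $\#\{x\in\M_\alpha\tqs|x|=n\}$ is bounded by a polynomial in~$n$ depending only on $\M$. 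In either case every generating series $G_{\alpha,\beta}(z)$ has radius of convergence at least~$1$, so the characteristic root $r$---the minimum of these radii, valued in $(0,1]\cup\{\infty\}$---must equal $1$ or~$\infty$.
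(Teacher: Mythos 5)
Your proposal is correct, and both the direction (i)$\Rightarrow$(ii) and part~\ref{item:13} essentially coincide with the paper's own proof (you are in fact slightly more careful in part~\ref{item:13}, since you separate the case where $T_\alpha$ is a finite trace, which Corollary~\ref{cor:1} as stated does not cover because it assumes $\omega\in\BM$; the paper applies the corollary directly, the finite case being trivial anyway). The genuine divergence is in (ii)$\Rightarrow$(i). The paper stays combinatorial: it picks an arbitrary maximal element $c_\alpha$ of $(\C_\alpha,\leq)$, observes that maximality forces $h_\alpha(c_\alpha)=f_\alpha(c_\alpha)=1$, notes that $(h_\alpha(c))_{c\in\Cstar_\alpha}$ is a probability vector so that $h_\alpha$ must vanish on every other clique, deduces that $\C_\alpha$ has a unique maximal element and hence is a lattice, and concludes by Proposition~\ref{prop:1}. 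You instead import the existence of the probability measures $\nu_\alpha$ attached to a probabilistic valuation and argue that $\nu_\alpha(\up x)=\nu_\alpha(\up y)=1$ forces $\up x\cap\up y\neq\emptyset$, so that any $x,y\in\M_\alpha$ admit a common upper bound $\omega\in\BM_\alpha$, whence a common upper bound $C_1(\omega)\cdot\ldots\cdot C_k(\omega)\in\M_\alpha$ for $k$ large, and Remark~\ref{rem:1} concludes. Both routes are valid. Yours is shorter and goes straight to the lattice property of $\M_\alpha$ rather than of $\C_\alpha$, but it leans on the nontrivial existence theorem for $\nu_\alpha$ from~\cite{abbes19:_markov}, whereas the paper's argument uses only the normalisation conditions~\eqref{eq:8} and M\"obius inversion and so can be read without any measure theory. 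One cosmetic point: the additivity you invoke is finite inclusion--exclusion, not $\sigma$-additivity.
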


\begin{proof}
Point~\ref{item:12}. To prove the stated equivalence, assume~(i), and let $f=(f_\alpha)_{\alpha\in X}$ be the dominant valuation. Let $\alpha\in X$, and let $c\in\C_\alpha$. Since $\C_\alpha$ identifies with~$\P(\Sigma_\alpha)$, the M\"obius transform of $f_\alpha$ evaluated at $c$ is given by:
  \begin{align*}
    h_\alpha(c)&=\sum_{c'\in\C_\alpha\tqs c'\geq c}(-1)^{|c'|-|c|}=
                 \begin{cases}
                   1,&\text{if $c=c_\alpha$ (the maximum of $\C_\alpha$)}\\
                   0,&\text{otherwise}.
                 \end{cases}
  \end{align*}
  Since $\vd\neq c_\alpha$ for all $\alpha\in X$, this shows that $f$ is a probabilistic valuation.

  Conversely, assume as in~(ii) that $f$ is probabilistic. Let $\alpha\in X$ be a state, and let $c_\alpha$ be a maximal element of~$(\C_\alpha,\leq)$. Then, on the one hand, and since $c_\alpha$ is a maximal clique, one has $h_\alpha(c_\alpha)=f_\alpha(c_\alpha)=1$. But on the other hand, $h_\alpha$~is nonnegative on $\C_\alpha$ and sums up to~$1$ on~$\C_\alpha$. Hence $h_\alpha$ vanishes on all other cliques of~$\C_\alpha$. Since this is true for every maximal element of~$\C_\alpha$, it entails that $\C_\alpha$ has actually a unique maximal element, which is thus its maximum~$\Sigma_\alpha$. Hence $(\C_\alpha,\leq)$ is a lattice for every $\alpha\in X$, which proves~(i) according to Proposition~\ref{prop:1}.

  Point~\ref{item:13}. We assume that $\X$ is a \DCS. According to Lemma~\ref{lem:3}, the partial order $(\Mbar_\alpha,\leq)$ has a maximum~$T_\alpha$ for every $\alpha\in X$, hence $\Mbar_\alpha\subseteq\Mbar_{\leq T_\alpha}$.  It follows at once from Corollary~\ref{cor:1} that $\BM_\alpha$ is at most countable, and that $\#\M_\alpha(n)\leq P(n)$ for all integers~$n$ and for some polynomial~$P$. All generating series $G_{\alpha,\beta}(z)$ are rational with non zero coefficients at least~$1$, and they have their coefficients dominated by some polynomial. They have therefore a radius of convergence either $1$ or~$\infty$. Hence $r\in\{1,\infty\}$.
\end{proof}

\begin{remark}
  In general, there might exist other probabilistic valuations than the dominant valuation, even for a \DCS. See an example at the end of next section.
\end{remark}

Since the dominant valuation $f$ is probabilistic, there corresponds a family of probability measures as described in Sect.~\ref{sec:valu-prob-valu}. The behaviour of the associated Markov chain of states-and-cliques is trivial, as shown by the following result.

\begin{proposition}
  \label{prop:2}
  Let $\X=(\M,X,\bot)$ be a non trivial \DCS\ such that\/ $\Sigma_\alpha\neq\emptyset$ for all $\alpha\in X$, and let $\nu=(\nu_\alpha)_{\alpha\in X}$ be the family of probability measures associated with the dominant valuation. Then for each initial state $\alpha\in X$, the probability measure $\nu_\alpha$ is the Dirac distribution~$\delta_{\{T_\alpha\}}$, where $T_\alpha=\max\Mbar_\alpha$.
\end{proposition}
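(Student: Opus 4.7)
The plan is to approximate the singleton $\{T_\alpha\}$ by a decreasing sequence of visual cylinders each of full $\nu_\alpha$-measure, and conclude by monotone continuity of $\nu_\alpha$.

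First, I would observe that under the hypothesis $\Sigma_\alpha\neq\emptyset$ for all $\alpha\in X$, the generalised execution $T_\alpha=(c_i)_{i\geq1}$ constructed in Lemma~\ref{lem:3} is in fact an infinite trace lying in $\BM_\alpha$: at each step $c_{i+1}=c_{\alpha_i}=\Sigma_{\alpha_i}\neq\emptyset$, hence $c_i\neq\vd$ for all~$i$. For each $k\geq1$, set $x_k=c_1\cdot\ldots\cdot c_k\in\M$. Since $x_k\leq T_\alpha\in\BM_\alpha$ and $\M_\alpha$ is downward closed in~$\M$, we have $x_k\in\M_\alpha$. By the defining property of the probability measure $\nu_\alpha$ associated with the dominant valuation, this yields
\[
  \nu_\alpha(\up{x_k})=f_\alpha(x_k)=1\qquad\text{for every }k\geq1.
\]

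The key step is to identify $\bigcap_{k\geq1}(\up{x_k}\cap\BM_\alpha)=\{T_\alpha\}$. The inclusion $\supseteq$ is immediate. Conversely, any $\xi$ in the intersection satisfies $x_k\leq\xi$ for every~$k$; since $T_\alpha$ is the least upper bound of the sequence $(x_k)_{k\geq1}$ in $(\Mbar,\leq)$---which follows directly from the componentwise definition of the order on generalised normal forms---this forces $T_\alpha\leq\xi$, and hence $\xi=T_\alpha$ because $T_\alpha=\max\Mbar_\alpha$ by Lemma~\ref{lem:3}.

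By monotone continuity of the probability measure $\nu_\alpha$ on the decreasing sequence $(\up{x_k}\cap\BM_\alpha)_{k\geq1}$, we conclude $\nu_\alpha(\{T_\alpha\})=\lim_k\nu_\alpha(\up{x_k})=1$, hence $\nu_\alpha=\delta_{\{T_\alpha\}}$. There is no real obstacle here; the only point meriting care is the verification that $T_\alpha$ is the supremum in $\Mbar$ of its finite prefixes $x_k$, together with the observation that $T_\alpha$ is genuinely infinite under the standing hypothesis on the~$\Sigma_\alpha$'s.
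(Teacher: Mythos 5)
Your argument is correct and coincides with the paper's own (first) proof: both take the increasing sequence of finite prefixes $x_k=c_1\cdot\ldots\cdot c_k$ of $T_\alpha$, note that the dominant valuation gives $\nu_\alpha(\up{x_k})=1$, and conclude by monotone continuity together with the maximality of $T_\alpha$ in $\Mbar_\alpha$. Your explicit check that $T_\alpha$ is genuinely infinite under the hypothesis $\Sigma_\alpha\neq\emptyset$ is a welcome extra detail.
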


\begin{proof}
  Assuming that $\X$ is a \DCS, we keep using the notation $c_\alpha=\max\C_\alpha=\Sigma_\alpha$ for all $\alpha\in X$.
  
  A direct proof is as follows. Fix $\alpha\in X$, and let $(\alpha_{i},z_i)_{i\geq0}$ be defined inductively by $\alpha_0=\alpha$, $z_0=\vd$ and $z_{i+1}=z_i\cdot c_{\alpha_i}$, $\alpha_{i+1}=\alpha\cdot z_i$. On the one hand, we have $\bigvee_{i\geq0}z_i=T_\alpha$ by the construction used in the proof of Lemma~\ref{lem:3}. But on the other hand, the characterisation of the probability measure $\nu_\alpha$ yields $\nu_\alpha(\up z_i)=f(z_i)=1$ for all $i\geq0$. Since $\up z_{i+1}\subseteq\up z_{i}$ for all $i\geq0$, we have thus:
  \begin{gather*}
    \nu_\alpha(\omega\geq T_\alpha)=\nu_\alpha\Bigl(\,\bigcap_{i\geq0}\up z_i\Bigr)=\lim_{i\to\infty}\nu_\alpha(\up z_i)=1.
  \end{gather*}
  Since $T_\alpha=\max\Mbar_\alpha$, it implies $\nu_\alpha(\omega=T_\alpha)=1$.

An alternative proof is as follows. Let $(Y_i)_{i\geq1}$ be the Markov chain of states-and-cliques associated to the dominant valuation, and let $\alpha\in X$. One has $\nu_\alpha(C_1=c)=h_\alpha(c)$ for all $c\in\Cstar_\alpha$, by~(\ref{eq:9}). The values of $h_\alpha$ computed in the proof of Th.~\ref{thr:1} show that the initial distribution of the chain is~$\delta_{\{(\alpha,c_\alpha)\}}$. It is shown in \cite{abbes19:_markov} that the $(\alpha,c)$-row of the transition matrix of the chain is proportional to~$h_{\alpha\cdot c}(\cdot)$. Hence all entries of the  $(\alpha,c)$-row are~$0$, except for the $\bigl((\alpha,c),(\beta,c_\beta)\bigr)$ entry with $\beta=\alpha\cdot c$, where the entry is~$1$. Hence the execution $T_\alpha$ is given $\nu_\alpha$-probability~$1$.
\end{proof}

\section{Irreducible deterministic concurrent systems}
\label{sec:irred-determ-conc}

Before stating the main result of this section, we need to prove two lemmas.

\begin{lemma}
  \label{lem:2}
  Let $\X=(\M,X,\bot)$ be a \DCS. Let $\alpha\in X$ and let $c\in\C_\alpha$ be a clique such that $a\notin c$ for some letter $a\in\Sigma_\alpha$. Then:
  \begin{gather*}
    \forall x\in\Mbar_\alpha\quad C_1(x)=c\implies a\notin x.
  \end{gather*}
\end{lemma}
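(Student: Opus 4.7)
The plan is to assume for contradiction that $a\in x$ and extract a violation of the normal pair condition using Proposition~\ref{prop:1}(\ref{item:17}). Write the normal form of $x$ as $(c_i)_{i\geq1}$ and let $\alpha_0=\alpha$, $\alpha_j=\alpha\cdot(c_1\cdots c_j)$ for $j\geq1$. Since $a\notin c_1=c$, if $a\in x$ then there is a smallest index $i\geq2$ with $a\in c_i$; by minimality, $a\notin c_j$ for every $j<i$.

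The central auxiliary claim is that $a\in\Sigma_{\alpha_j}$ for every $j=0,1,\ldots,i-1$. I would prove this by induction on $j$: the base case $j=0$ is the hypothesis $a\in\Sigma_\alpha$. For the step, assuming $a\in\Sigma_{\alpha_j}$ with $j\leq i-2$, I observe that $c_{j+1}\subseteq\Sigma_{\alpha_j}$ and, by the DCS property (Prop.~\ref{prop:1}(\ref{item:17})), all letters of $\Sigma_{\alpha_j}$ commute pairwise. Consequently $c_{j+1}\cup\{a\}$ is a clique of $\M$ contained in $\Sigma_{\alpha_j}$, and since $(\C_{\alpha_j},\leq)$ identifies with $(\P(\Sigma_{\alpha_j}),\subseteq)$, this union lies in $\C_{\alpha_j}$. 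Reading it as $c_{j+1}\cdot a$ in $\M$, we deduce $\alpha_j\cdot(c_{j+1}\cdot a)\neq\bot$, hence $\alpha_{j+1}\cdot a\neq\bot$, that is $a\in\Sigma_{\alpha_{j+1}}$.

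With the claim in hand, I close the argument using normality of the pair $(c_{i-1},c_i)$: since $a\in c_i$, there exists $b\in c_{i-1}$ with $(b,a)\in D$. By minimality of $i$ we have $a\notin c_{i-1}$, so $b\neq a$. But $b\in c_{i-1}\subseteq\Sigma_{\alpha_{i-2}}$, and the claim yields $a\in\Sigma_{\alpha_{i-2}}$. Two distinct letters of $\Sigma_{\alpha_{i-2}}$ must commute by the DCS property, giving $(b,a)\in I$, which contradicts $(b,a)\in D$ (recall $D=(\Sigma\times\Sigma)\setminus I$). Hence no such $i$ exists, proving $a\notin x$.

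The only subtle point, and the step I would treat most carefully, is the induction establishing $a\in\Sigma_{\alpha_j}$: one must verify that $c_{j+1}\cup\{a\}$ is genuinely a clique enabled from $\alpha_j$ (not merely a subset of $\Sigma_{\alpha_j}$) and that the identification of $c_{j+1}\cdot a$ with this clique via commutativity is legitimate. Everything else is a direct unpacking of the normal form and the definition of a DCS.
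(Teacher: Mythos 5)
Your proof is correct and follows essentially the same route as the paper's: the same induction along the normal form showing that $a$ stays enabled at each intermediate state (via the DCS property turning $c_{j+1}\cup\{a\}$ into an enabled clique) and the same use of the normal-pair condition to exclude $a$ from the next clique. The only differences are cosmetic — you argue by a minimal offending index rather than by a direct joint induction on the two properties, and you invoke the identification $\C_\beta\cong\P(\Sigma_\beta)$ where the paper takes the join $a\vee c_i$ in the lattice $\C_{\alpha_{i-1}}$ directly.
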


\begin{proof}
  Let $\alpha$, $a$ and $c$ be as in the statement. Clearly, the implication stated in the lemma is true if we prove it to be true for $x$ ranging over~$\M_\alpha$ instead of~$\Mbar_\alpha$. Hence, let $x\in\M_\alpha$ be such that $C_1(x)=c$. Let $(c_i)_{i\geq1}$ be the generalised normal form of~$x$, and define by induction $x_0=\vd$, $x_{i+1}=x_i\cdot c_{i+1}$ for all $i\geq0$ and $\alpha_i=\alpha\cdot x_i$ for all $i\geq0$. We prove by induction on $i\geq1$ that:
  \begin{inparaenum}[1)]
  \item $a\in\Sigma_{\alpha_{i-1}}$; and
  \item $a\notin c_i$.
  \end{inparaenum}

  For $i=1$, both properties derive from the assumptions of the lemma. Assume that both properties hold for some $i\geq1$. By construction, $c_i\in\C_{\alpha_{i-1}}$\,, and $a\in\Sigma_{\alpha_{i-1}}$ by the induction hypothesis. Since the concurrent system is deterministic, it follows that $a\vee c_i\in\C_{\alpha_{i-1}}$. Since $a\notin c_i$ by the assumption hypothesis, this \lub\ is given by $c_i\cdot a\in\C_{\alpha_{i-1}}$\,. This entails first that $a\in\C_{\alpha_{i-1}\cdot c_i}$\,, but $\alpha_{i-1}\cdot c_i=\alpha_i$ hence $a\in\Sigma_{\alpha_i}$\,. But it also entails that $a\notin c_{i+1}$\,, completing the induction step. The result of the lemma follows.
\end{proof}

\begin{lemma}
  \label{lem:4}
  Let $\X=(\M,X,\bot)$ be a concurrent system. Let $\alpha\in X$, and let $r_\alpha$ be the radius of convergence of the generating series $G_\alpha(z)=\sum_{x\in\M_\alpha}z^{|x|}$. Then the following properties are equivalent:
  \begin{inparaenum}[\normalfont(i)]
  \item\label{item:5} $\M_\alpha$~is finite;    
  \item\label{item:6} $\BM_\alpha=\emptyset$;
  \item\label{item:7} $r_\alpha=\infty$.
  \end{inparaenum}
\end{lemma}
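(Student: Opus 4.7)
The plan is to prove the equivalences via a small cycle $\text{(\ref{item:5})} \Rightarrow \text{(\ref{item:6})} \Rightarrow \text{(\ref{item:5})}$ together with the equivalence $\text{(\ref{item:5})} \Leftrightarrow \text{(\ref{item:7})}$. The implication $\text{(\ref{item:5})} \Rightarrow \text{(\ref{item:6})}$ is immediate: if $\xi=(c_i)_{i\geq1}\in\BM_\alpha$, then all $c_i\neq\vd$, so the prefixes $c_1\cdot\ldots\cdot c_n$ form an infinite strictly increasing family inside the downward closed set~$\M_\alpha$, forcing $\M_\alpha$ to be infinite.

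For the reverse implication $\text{(\ref{item:6})} \Rightarrow \text{(\ref{item:5})}$ (in contrapositive form: $\M_\alpha$ infinite $\Rightarrow$ $\BM_\alpha \neq \emptyset$), I would exploit the finiteness of the digraph of states-and-cliques~$\Dstar$. Since the number of traces of bounded height is bounded by $|\C|^k$, the infiniteness of~$\M_\alpha$ forces the existence, for every $n\geq1$, of a trace $x_n\in\M_\alpha$ with $\height(x_n)\geq n$. Each such trace corresponds, via the construction recalled in Sect.~\ref{sec:digr-stat-cliq}, to a path of length at least~$n$ in~$\Dstar$ starting from some node $(\alpha,c)$ with $c\in\Cstar_\alpha$. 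The graph $\Dstar$ being finite, a standard König-type compactness argument (some starting node admits paths of unbounded length; iteratively choose a successor admitting paths of unbounded length) produces an infinite path in~$\Dstar$. That path corresponds bijectively to an infinite execution in~$\BM_\alpha$, as required.

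For $\text{(\ref{item:5})} \Leftrightarrow \text{(\ref{item:7})}$, the key observation is that $G_\alpha(z) = \sum_{\beta\in X} G_{\alpha,\beta}(z)$ is a finite sum of rational series (Sect.~\ref{sec:characteristic-root}), hence itself rational with non-negative integer coefficients. Thus $r_\alpha=\infty$ iff $G_\alpha$ has no pole in $\mathbb{C}$ iff $G_\alpha$ is a polynomial iff $\M_\alpha$ is finite. A self-contained alternative is available and avoids invoking rationality: if $r_\alpha=\infty$, then $\limsup_n |\M_\alpha(n)|^{1/n}=0$, and since $|\M_\alpha(n)|$ is a non-negative integer, this forces $|\M_\alpha(n)|=0$ for $n$ large; the converse direction is trivial since a finite $\M_\alpha$ makes $G_\alpha$ a polynomial.

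The only non-routine step is $\text{(\ref{item:6})} \Rightarrow \text{(\ref{item:5})}$, and the main obstacle there is ensuring that the König-style extraction lands inside~$\Dstar$ (not merely in~$\D$) so that the limiting path truly corresponds to an element of~$\BM_\alpha$ and not merely of~$\Mbar_\alpha$. This is handled by restricting from the start to paths whose initial node lies in $\Dstar$, which is automatic since $x_n \neq \vd$ ensures $C_1(x_n) \in \Cstar_\alpha$, and by observing that $\Dstar$ is closed under successors of its own edges.
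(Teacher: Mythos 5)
Your proof is correct, but it takes a genuinely different route from the paper's for the one non-routine direction. The paper proves $\text{(ii)}\Rightarrow\text{(i)}$ and $\text{(iii)}\Rightarrow\text{(i)}$ simultaneously, by contraposition, with a single pigeonhole construction: if $\M_\alpha$ is infinite it contains executions of arbitrarily large length, so by finiteness of $X$ there exist $x\in\M_\alpha$ and $y\neq\vd$ with $\alpha\cdot x=\alpha\cdot(x\cdot y)$; then every $x\cdot y^n$ lies in $\M_\alpha$, which at once forces a coefficient $\geq1$ of $G_\alpha$ in each degree $|x|+n|y|$ (hence $r_\alpha\leq 1<\infty$) and exhibits the explicit, eventually periodic infinite execution $\bigvee_{n\geq0}x\cdot y^n\in\BM_\alpha$. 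You instead split the work: a K\"onig-type compactness extraction in the finite digraph $\Dstar$ to produce an infinite execution, and a separate rationality (or integrality-of-coefficients) argument for the radius of convergence. Both are sound --- your care about staying inside $\Dstar$ rather than $\D$ is exactly the right point to worry about, and your $\limsup_n\card\M_\alpha(n)^{1/n}=0$ argument, combined with downward closure of $\M_\alpha$, is a clean elementary substitute for invoking rationality. What the paper's approach buys is economy (one construction serves both contrapositives) and an explicit witness in $\BM_\alpha$; what yours buys is independence from the pigeonhole-on-states trick for the boundary statement, at the cost of a slightly heavier compactness device and a two-step proof.
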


\begin{proof}
  The implications $\text{(\ref{item:5}})\implies\text{(\ref{item:6}})$ and $\text{(\ref{item:5}})\implies\text{(\ref{item:7}})$ are clear. 

Assume that $\M_\alpha$ is infinite. Then there exists executions in $\M_\alpha$ of length arbitrary large. Therefore there exists $x\in\M_\alpha$ and $y\neq\vd$ such that $\alpha\cdot x=\alpha\cdot(x\cdot y)$. Then all traces $x_n=x\cdot y^n$ belong to $\M_\alpha$ for $n\geq0$. This proves two things. First, if $k=|y|$, the  coefficient of $z^{|x|+kn}$ in the series $G_\alpha(z)$ is~$\geq1$ for all integers~$n$, hence $r_\alpha<\infty$. Second, the execution $\xi=\bigvee_{n\geq0}x_n$ is an element of~$\BM_\alpha$, showing that $\BM_\alpha\neq\emptyset$. Hence we have proved both  $\text{(\ref{item:6}})\implies\text{(\ref{item:5}})$ and $\text{(\ref{item:7}})\implies\text{(\ref{item:5}})$ by contraposition, completing the proof.
\end{proof}

\begin{theorem}
  \label{thr:2}
  Let $\X=(\M,X,\bot)$ be an irreducible and non trivial concurrent system, of characteristic root~$r$, and let $f$ be the dominant valuation of~$\X$. Then the following statements are equivalent:
  \begin{compactenum}[\normalfont(i)]
  \item\label{item:3} $\X$ is deterministic.
  \item\label{item:4} $f$ is a probabilistic valuation.
  \item\label{item:8} $f$ is the only probabilistic valuation of~$\X$.
  \item\label{item:9} $r=1$.
  \item\label{item:10} One set\/ $\BM_\alpha$ is at most countable.
  \item\label{item:11} Every set\/ $\BM_\alpha$ is at most countable.
  \end{compactenum}
\end{theorem}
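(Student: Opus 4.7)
The plan is to close the chain of equivalences by treating the routine implications first, reducing the theorem to a single substantive application of the spectral property. First, \textbf{(i)$\Leftrightarrow$(ii)} is immediate from Theorem~\ref{thr:1}.\ref{item:12}, once we observe that in an irreducible non trivial system $\Sigma_\alpha\neq\emptyset$ for every~$\alpha$: non triviality yields some $a^*\in\Sigma_{\alpha^*}$ at some state, and irreducibility condition~2 provides $y\in\M_{\alpha,\alpha^*}$; if $y=\vd$ then $\alpha=\alpha^*$ and we are done, otherwise the first clique of $y$ lies in $\C_\alpha$ and contributes a letter to $\Sigma_\alpha$. Next, \textbf{(i)$\Rightarrow$(vi)} and \textbf{(i)$\Rightarrow$(iv)} are contained in Theorem~\ref{thr:1}.\ref{item:13}, provided we rule out $r=\infty$; but irreducibility makes $\M_\alpha$ infinite (compose $\M_{\alpha,\beta}$ and $\M_{\beta,\alpha}$ and iterate), so Lemma~\ref{lem:4} gives $r_\alpha\leq 1$ and thus $r\leq 1$. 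The implications \textbf{(vi)$\Rightarrow$(v)} and \textbf{(iii)$\Rightarrow$(ii)} are tautological.

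The reverse implications \textbf{(v)$\Rightarrow$(i)} and \textbf{(iv)$\Rightarrow$(i)} are proved by contraposition through a single combinatorial construction. Suppose $\X$ is not deterministic; by Proposition~\ref{prop:1} there exist $\beta\in X$ and two letters $a,b\in\Sigma_\beta$ with $(a,b)\in D$. Irreducibility supplies $u\in\M_{\beta\cdot a,\beta}$ and $v\in\M_{\beta\cdot b,\beta}$, so both $au$ and $bv$ lie in $\M_{\beta,\beta}$. For any binary sequence $\eta$ (finite or infinite) define $w_\eta$ by concatenating, at position~$k$, either $au$ (if $\eta_k=0$) or $bv$ (if $\eta_k=1$). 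Distinct $\eta$'s yield distinct $w_\eta$'s: at the first position of disagreement the two post-prefix tails begin respectively with $a$ and with~$b$, and the first clique in the normal form of the first tail contains $a$ but not $b$ (for $b$ cannot cohabit a clique with $a$ since $(a,b)\in D$), and symmetrically for the second tail. In the finite case this produces $2^n$ distinct elements of $\M_\beta$ of length at most $n\cdot\max(|au|,|bv|)$, forcing $r\leq r_\beta<1$ and contradicting (iv); in the infinite case we obtain uncountably many elements of $\BM_\beta$, hence of $\BM_\alpha$ after left-multiplication by some $x\in\M_{\alpha,\beta}$, contradicting (v).

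The implication \textbf{(i)$\Rightarrow$(iii)} is the main obstacle, and rests crucially on the spectral property of~\cite{abbes20}. Since an irreducible DCS has $r=1$, the spectral property yields $r^a>1$ for every $a\in\Sigma$; as $r^a\in(0,1]\cup\{\infty\}$, this forces $r^a=\infty$, so by Lemma~\ref{lem:4} every $\M^a_\beta$ is finite. The key claim is then $\BM_\alpha=\{T_\alpha\}$ for every~$\alpha$. Indeed, take $\xi\in\BM_\alpha$ with $\xi\neq T_\alpha$; by Lemma~\ref{lem:3} we have $\xi<T_\alpha$, so letting $i$ be the smallest index at which the normal forms differ, $\xi$ and $T_\alpha$ reach the same state~$\alpha_{i-1}$ after step $i-1$ and $C_i(\xi)\subsetneq c_{\alpha_{i-1}}$. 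Choose $a\in c_{\alpha_{i-1}}\setminus C_i(\xi)$; Lemma~\ref{lem:2} applied to the tail of~$\xi$ starting at step~$i$ forbids~$a$ from appearing in any subsequent clique, so this tail is an infinite generalised execution from $\alpha_{i-1}$ in $\BM^a_{\alpha_{i-1}}$, contradicting the finiteness of $\M^a_{\alpha_{i-1}}$ (via Lemma~\ref{lem:4} again). Hence $\BM_\alpha=\{T_\alpha\}$, any probabilistic valuation $g$ gives a probability measure on the singleton, namely $\delta_{T_\alpha}$, and therefore $g_\alpha(x)=\nu^g_\alpha(\up x)=\un_{x\leq T_\alpha}=f_\alpha(x)$ for all $x\in\M$, giving $g=f$. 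The real combinatorial content of the theorem thus lies in this single invocation of the spectral property; the rest is a careful repackaging of earlier results.
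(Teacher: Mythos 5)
Your proof is correct and follows the paper's overall architecture: the easy implications are delegated to Theorem~\ref{thr:1}, the implications $\text{(iv)}\Rightarrow\text{(i)}$ and $\text{(v)}\Rightarrow\text{(i)}$ are obtained by contraposition from the same free submonoid generated by $a\cdot u$ and $b\cdot v$, and the heart of the matter, $\text{(i)}\Rightarrow\text{(iii)}$, rests on the same chain spectral property $\Rightarrow r^a=\infty\Rightarrow$ Lemma~\ref{lem:4} $\Rightarrow$ Lemma~\ref{lem:2}. The one genuinely different piece is how you conclude $\text{(i)}\Rightarrow\text{(iii)}$: the paper applies Lemma~\ref{lem:2} only to the \emph{first} clique of an infinite execution, deduces $\{\omega\in\BM_\alpha\tq C_1(\omega)=c\}\subseteq\BM^a_\alpha=\emptyset$ for every $c\neq c_\alpha$, and then kills $h_\alpha(c)$ through the identity $h_\alpha(c)=\nu_\alpha(C_1=c)$ of~(\ref{eq:9}); you instead apply the same mechanism at every level of the normal form to prove the stronger set-theoretic fact $\BM_\alpha=\{T_\alpha\}$, from which uniqueness of the probability measure, hence of the valuation, is immediate without invoking the M\"obius transform or the Markov chain of cliques at all. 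Your route is slightly longer but buys a clean structural statement (the boundary is a singleton, which the paper only obtains measure-theoretically in Prop.~\ref{prop:2}) and avoids the machinery of~(\ref{eq:9}). You also patch two points the paper passes over in silence: that $r\neq\infty$ (via infiniteness of $\M_\alpha$ and Lemma~\ref{lem:4}), so that Theorem~\ref{thr:1} really yields $r=1$ and not merely $r\in\{1,\infty\}$; and that the uncountability of $\BM_\beta$ at the offending state propagates to \emph{every} $\BM_\alpha$ by left multiplication, which is needed to negate the existential statement~(v). Both patches are correct and welcome.
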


\begin{proof}
Since $\X$ is both irreducible and non trivial, it satisfies in particular $\Sigma_\alpha\neq\emptyset$ for all $\alpha\in X$. Hence the equivalence $\text{(\ref{item:3})}  \iff\text{(\ref{item:4})}$ and the implications $\text{(\ref{item:3})}\implies\text{(\ref{item:9})}$ and $\text{(\ref{item:3})}\implies\text{(\ref{item:11})}$ derive already from Theorem~\ref{thr:1}. The implications $\text{(\ref{item:8})}\implies\text{(\ref{item:4})}$ and $\text{(\ref{item:11})}\implies\text{(\ref{item:10})}$ are trivial.

  $\text{(\ref{item:3})}\implies\text{(\ref{item:8})}$. Let $f=(f_\alpha)_{\alpha\in X}$ be a probabilistic valuation, and let $\widetilde f=(\widetilde f_\alpha)_{\alpha\in X}$ be the dominant valuation. Let $\alpha\in X$ and let $c\in\Cstar_\alpha$ with $c\neq c_\alpha$, where $c_\alpha=\Sigma_\alpha$ is the maximum of~$\C_\alpha$. There is thus a letter $a\in\Sigma_\alpha$ such that $a\notin c$. Let $\M^a$ be the submonoid of $\M$ generated by $\Sigma\setminus\{a\}$. It follows from Lemma~\ref{lem:2} that $\{\omega\in\BM_\alpha\tq C_1(\omega)=c\}\subseteq\BM^a_\alpha$.

  According to the spectral property recalled in Section~\ref{sec:irreducibility}, the characteristic root $r^a$ of $\X^a=(\M^a,X,\bot)$ satisfies $r^a>r$ since $\X$ is assumed to be irreducible. But $r=1$ since $\X$ is deterministic, and therefore $r^a=\infty$, which implies that $\BM^a_\alpha=\emptyset$ according to Lemma~\ref{lem:4}. Let $\nu=(\nu_\alpha)_{\alpha\in X}$ be the family of probability measures associated with the probabilistic valuation~$f$, as explained in Sect.~\ref{sec:valu-prob-valu}. Then $\nu_\alpha(\BM_\alpha^a)=0$ and thus $\nu_\alpha(C_1=c)=0$. But one also has $h_\alpha(c)=\nu_\alpha(C_1=c)$ according to~(\ref{eq:9}), where $h_\alpha$ is the M\"obius transform of~$f_\alpha$. Hence $h_\alpha(c)=0$. We have proved that $h_\alpha$ vanishes on all cliques $c\in\C_\alpha$ such that $c\neq c_\alpha$. Since $(h_\alpha(c))_{c\in\Cstar_\alpha}$ is a probability vector, it entails that $h_\alpha(c_\alpha)=1$. Thus $h_\alpha$ coincides with the M\"obius transform of~$\widetilde f_\alpha$, and $f=\widetilde f$.

  $\text{(\ref{item:9})}\implies\text{(\ref{item:3})}$ and $\text{(\ref{item:10})}\implies\text{(\ref{item:3})}$ . By contraposition, assume that $\X$ is not deterministic. Prop.~\ref{prop:1} implies the existence of a state $\alpha$ and of two distinct letters $a,b\in\Sigma_\alpha$ such that $a\cdot b\neq b\cdot a$. Since $\X$ is assumed to be irreducible, there exists $x\in\M_{\alpha\cdot a,\alpha}$ and $y\in\M_{\alpha\cdot b,\alpha}$. Put $x_a=a\cdot x$ and $x_b=b\cdot y$, and we can also assume without loss of generality that $|x_a|=|x_b|$. Then $\M_\alpha$ contains the submonoid generated by $\{x_a,x_b\}$, which is free. This implies two things: first,  the generating series $G_\alpha(z)=\sum_{x\in\M_\alpha}z^{|x|}$ has radius of convergence smaller than~$1$, and thus $r<1$; second, $\BM_\alpha$~is uncountable. The proof is complete.  
\end{proof}

For an irreducible \DCS, the behaviour of the Markov chain of states-and-cliques associated to the unique probabilistic dynamics is the trivial dynamics described by Prop.~\ref{prop:2}. This is illustrated in the following example.

\begin{example}
Figure~\ref{fig:aqqqwqw} depicts an example of irreducible \DCS. The digraph of states-and-cliques of the system is depicted on Fig.~\ref{fig:aqqqapokferw}. Compare with the situation depicted next for a \DCS\ which is not irreducible.

  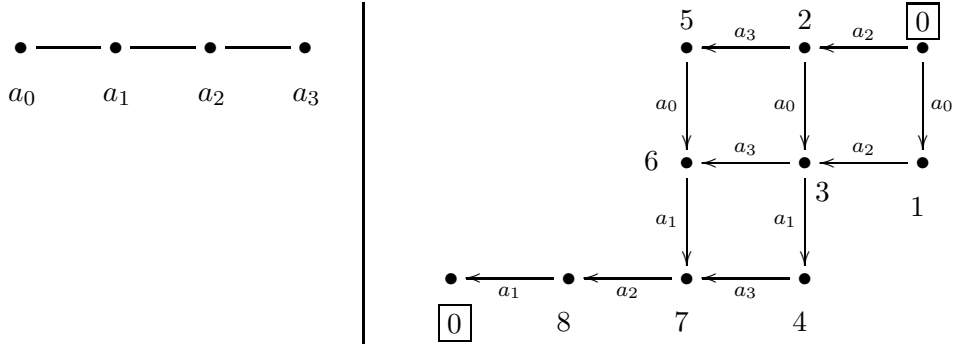
\begin{figure}
    $$
    \begin{array}{c|c}
\xymatrix{%
      \bullet\ar@{-}[r]\labeld{a_0}
&\bullet\ar@{-}[r]\labeld{a_1}                                     
&\bullet\ar@{-}[r]\labeld{a_2}                                     
&\bullet\labeld{a_3}                                     
}\quad\strut      &\qquad
      \xymatrix@R=3em@C=3em{
      &&\bullet\ar[d]_{a_0}\labelu{5}
      &\bullet\ar[d]_{a_0}\ar[l]_{a_3}\labelu{2}
      &\bullet\ar[d]^{a_0}\ar[l]_{a_2}\labelu{\fbox{$0$}}\\
      &&\bullet\ar[d]_{a_1}\labell{6}
      &\bullet\ar[d]_{a_1}\ar[l]_{a_3}\labeldr{3}
      &\bullet\ar[l]_{a_2}\labeld{1}\\
      \bullet\labeld{\fbox{$0$}}
      &\bullet\ar[l]^{a_1}\labeld{8}
      &\bullet\ar[l]^{a_2}\labeld{7}
      &\bullet\ar[l]^{a_3}\labeld{4}
      }
    \end{array}
    $$
\caption{Example of an irreducible and deterministic concurrent system $\X=(\M,X,\bot)$ with $\Sigma=\{a_0,\ldots,a_3\}$, $X=\{0,1,\ldots,8\}$. \textsl{Left:} Coxeter graph of the monoid~$\M$. \textsl{Right:} multigraph of states of~$\X$. The two framed labels $\fbox{$0$}$ are identified and correspond to the same state.}
    \label{fig:aqqqwqw}
  \end{figure}

  \begin{figure}
    $$
    \xymatrix{%
&*+[F]{\strut(5,a_0)}\ar[d]      &*+[F]{\strut(8,a_1)}\ar[r]\POS!D!R\ar@(dr,ur)[ddd]!R!U\POS[]\POS!R!D(.5)\ar[dr]!U!L
      &*+[F--]{\strut(0,a_0)}\\
*+[F]{\strut(2,a_0a_3)}\ar[r]&      *+[F]{\strut(6,a_1)}\ar[r]
      &*+[F]{\strut(7,a_2)}\ar[u]
      &*+[F--]{\strut(0,a_2)}\ar[r]
      &*+[F--]{\strut(2,a_3)}
      \\
&*+[F]{\strut(4,a_3)}\POS!U!R\ar[ur]!D!L      &*+[F]{\strut(3,a_1 a_3)}\ar[u]
      &*+[F--]{\strut(2,a_0)}\ar[d]
\\
       &*+[F]{\strut(1,a_2)}\POS!U!R\ar[ur]!D!L\POS!D!R\ar@(dr,dl)[rr]!D!L\POS[]\POS!D\ar@(d,l)[drr]
     &*+[F]{\strut(0,a_0 a_2)}\ar[u]\ar[r]\ar[dr]!U!L
      &*+[F--]{\strut(3,a_1)}
\\
&&&*+[F--]{\strut(3,a_3)}
      }
    $$
    \caption{Digraph of states-and-cliques for the \DCS\ depicted on Fig.~\ref{fig:aqqqwqw}. Nodes with solid frames are nodes of the form $(\alpha,c_\alpha)$ with $c_\alpha=\max\C_\alpha$. The probability for the Markov chain of states-and-cliques to jump from a solid frame node to a dashed frame node is~$0$; the probability of starting in a dashed node in~$0$.}
    \label{fig:aqqqapokferw}
  \end{figure}
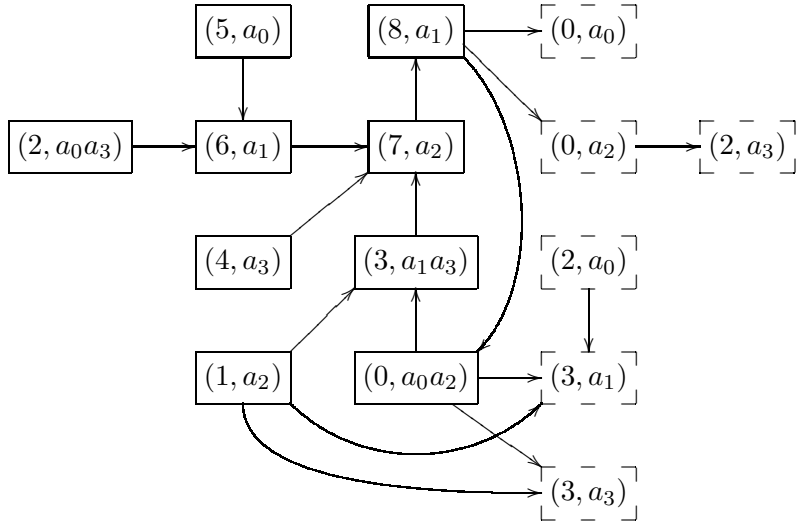
\end{example}

\begin{example}
Without the irreducibility assumption, the equivalence stated in Th.~\ref{thr:2} may fail. We give below an example of a deterministic concurrent systems not irreducible, and not satisfying point~(\ref{item:8}).

Let $\X=(\M,X,\bot)$ be the \DCS\ depicted in Fig.~\ref{fig:;lksadfk}. The system is not irreducible for several reasons: none of the three conditions for irreducibility is met. The probabilistic valuations of $\X$ are all of the following form, for some real $p\in[0,1]$:
  \begin{align*}
    f_{\alpha_0}(a)&=1&f_{\alpha_0}(c)&=p&    f_{\alpha_1}(b)&=1&f_{\alpha_1}(c)&=p&f_{\beta_0}(a)&=1&f_{\beta_1}(b)&=1
  \end{align*}

  Hence the dominant valuation is not the unique probabilistic valuation, contrary to irreducible systems as stated by point (\ref{item:8}) of Th.~\ref{thr:2}. The parameter $p$ is to be interpreted as the ``probability of playing~$c$'' in the course of the execution.  But this decision---playing $c$ or not---is made once, hence allowing all values between $0$ or~$1$ for the probability. Whereas, in a sequential model of concurrency, that would typically be a decision repeated infinitely often, hence yielding the only two possible values $0$ or $1$ for this probability.
The formula $\nu_\alpha(C_1=\gamma)=h_\alpha(\gamma)$ for $\gamma\in\C_\alpha$ yields the following initial distribution of the Markov chain of states-and-cliques if, for instance, the initial state of the system is~$\alpha_0$:
\begin{align*}
  \nu_{\alpha_0}(C_1=a)&=1-p&\nu_{\alpha_0}(C_1=c)&=0&\nu_{\alpha_0}(C_1=ac)&=p
\end{align*}


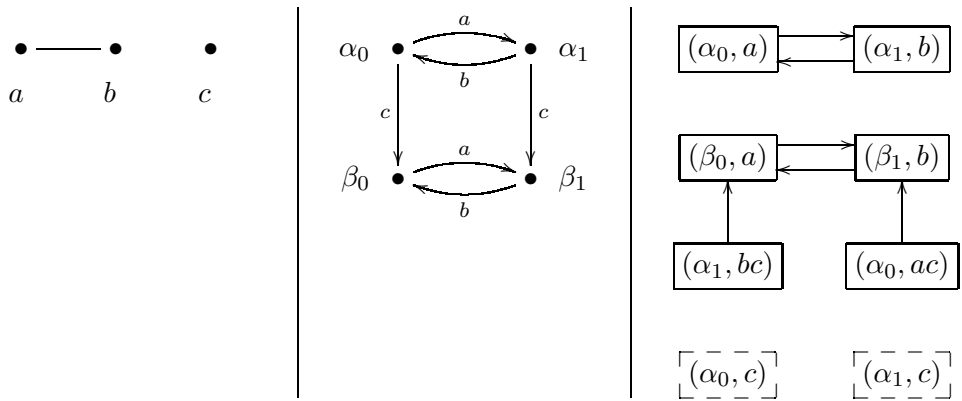
\begin{figure}
  $$\begin{array}{c|c|c}
      \xymatrix{\bullet\ar@{-}[r]\labeld{a}&\bullet\labeld{b}&\bullet\labeld{c}}
\qquad\strut&\qquad
\xymatrix@R=3.5em@C=3.5em{\bullet\ar@/^/^{a}[r]\ar[d]_ {c}\labell{\alpha_0}
&\bullet\ar@/^/^{b}[l]\ar[d]^{c}\labelr{\alpha_1}\\
\bullet\ar@/^/^{a}[r]\labell{\beta_0}&\bullet\ar@/^/^{b}[l]\labelr{\beta_1}
                                       }\qquad\strut
&\quad
\entrymodifiers={+[][F]}%
\xymatrix{
(\alpha_0,a)\ar@<1ex>[r]&(\alpha_1,b)\ar@<1ex>[l]\\
      (\beta_0,a)\ar@<1ex>[r]&(\beta_1,b)\ar@<1ex>[l]\\
      (\alpha_1,bc)\ar[u]&(\alpha_0,ac)\ar[u]\\
      *+[F--]{(\alpha_0,c)}&*+[F--]{(\alpha_1,c)}
}
  \end{array}$$
  \caption{A non irreducible \DCS\ not satisfying property (\ref{item:8}) of Th.~\ref{thr:2}. \textsl{Left:} the Coxeter graph of the monoid. \textsl{Middle:} the multigraph of states of the \DCS. \textsl{Right:} the digraph of states-and-cliques. The parameter $p$ is only involved in the initial distribution of the Markov chain of states-and-cliques. The dashed nodes are isolated in the digraph of states-and-cliques and are immaterial to the Markov chain of states-and-cliques.}
  \label{fig:;lksadfk}
\end{figure}
\end{example}

\bibliography{biblio}
\bibliographystyle{plain}

\end{document}